\newcolumntype{C}[1]{>{\centering\hspace{0pt}}p{#1}}
\newcommand{\R}{\mathbb{R}}
\newcommand{\Hc}{\mathcal{H}}
\newcommand{\Vc}{\mathcal{V}}
\newtheorem{thm}{Theorem}[section]
\newtheorem{prop}[thm]{Proposition}
\newtheorem{lem}[thm]{Lemma}
\newtheorem{cor}[thm]{Corollary}
\theoremstyle{definition}
\newtheorem{defn}[thm]{Definition}
\theoremstyle{definition}
\newtheorem*{rmk}{Remark}
\numberwithin{equation}{section}
\title{The Mean Curvature of Special Lagrangian 3-folds in $\text{SU}(3)$-Structures with Torsion}
\author{Gavin Ball and Jesse Madnick}
\date{December 2020}
\newcommand{\Addresses}
{{  \bigskip
  \textsc{Universit\'{e} du Qu\'{e}bec \`{a} Montr\'{e}al} \par\nopagebreak
  \textsc{D\'{e}partement de math\'{e}matiques}\par\nopagebreak
  \textsc{Case postale 8888, succursale centre-ville}\par\nopagebreak
    \textsc{Montr\'{e}al (Qu\'{e}bec), H3C 3P8, Canada}\par\nopagebreak
 \textit{E-mail address}: \texttt{gavin.cf.ball@gmail.com} \\

 \bigskip
  \textsc{National Center for Theoretical Sciences} \par\nopagebreak
  \textsc{National Taiwan University}\par\nopagebreak
  \textsc{Taipei, Taiwan}\par\nopagebreak
 \textit{E-mail address}: \texttt{jmadnick@ncts.ntu.edu.tw}

}}
\begin{document}

\maketitle

\begin{abstract}
We derive formulas for the mean curvature of special Lagrangian $3$-folds in the general case where the ambient 6-manifold has intrinsic torsion.  Consequently, we are able to characterize those $\text{SU}(3)$-structures for which every special Lagrangian $3$-fold is a minimal submanifold. In the process, we obtain an obstruction to the local existence of special Lagrangian $3$-folds.
\end{abstract}

\tableofcontents
 
\section{Introduction}

\indent \indent Let $(M^6, g)$ be a Riemannian 6-manifold.  If the holonomy group of $g$ lies in $\text{SU}(3)$, then on $M$ there exist a parallel orthogonal complex structure $J$, a parallel $(1,1)$-form $\Omega \in \Omega^{1,1}(M)$ given by $\Omega = g(J \cdot, \cdot)$, and a non-vanishing parallel $(3,0)$-form $\Upsilon \in \Omega^{3,0}(M)$ normalized to have $\Upsilon \wedge \overline{\Upsilon} = -\frac{4}{3}i\,\Omega^3$.  The manifold $M$ with the parallel tensors $(g, \Omega, J, \Upsilon)$ is called a \textit{Calabi-Yau $3$-fold}. \\
\indent A \textit{special Lagrangian (SL) 3-fold of phase $\theta \in [0, 2\pi)$} is a 3-dimensional submanifold $L \subset M$ satisfying
$$\Omega|_L = 0 \ \ \ \ \text{ and } \ \ \ \ \text{Im}(e^{-i\theta}\Upsilon)|_L = 0.$$
Special Lagrangian $3$-folds were first defined by Harvey and Lawson in their foundational 1982 paper on calibrations \cite{harvey1982calibrated}.  In that work, they proved that SL 3-folds in Calabi-Yau 3-folds exist locally in abundance, depending on $2$ arbitrary functions of $2$ real variables, in the sense of exterior differential systems.  Moreover, they showed that $\text{Re}( e^{-i\theta}\Upsilon )$ is a calibration whose calibrated submanifolds are exactly the SL $3$-folds of phase $\theta$, and hence every SL 3-fold is homologically volume-minimizing.  In particular, SL 3-folds are minimal submanifolds. \\
\indent Since 1982, SL 3-folds in Calabi-Yau $3$-folds have been the subject of intense study.  For example, by imposing various geometric assumptions, large families of explicit SL $3$-folds are now known (and, in many cases, classified): see, for example, \cite{bryant2000second, bryant2006so, joyce20010, joyce2002ruled, joyce2002special, ionel2008cohomogeneity}.  Singular SL $3$-folds have also been constructed, such as cones with interesting topologies \cite{haskins2007special} and pairs of intersecting planes \cite{lawlor1989angle}.  In turn, gluing methods and desingularization schemes based on these singular models have proven highly successful in constructions: see, for example, \cite{butscher2001regularizing, lee2003connected, lee2003embedded, pacini2013special}. The deformation theory of closed special Lagrangians was first studied by McLean in his 1990 PhD thesis \cite{mclean1998deformations}, and furthered by Hitchin \cite{hitchin1997moduli}. Special Lagrangians are also of interest in physics by way of the SYZ Conjecture \cite{strominger1996mirror}, which explains mirror symmetry in terms of fibrations of Calabi-Yau $3$-folds by SL $3$-tori. \\

\indent Special Lagrangian $3$-folds may also be studied in a class of ambient spaces more general than Calabi-Yau $3$-folds: namely, those Riemannian $6$-manifolds $(M^6, g)$ with an $\text{SU}(3)$-structure $(J, \Omega, \Upsilon)$ that is merely $g$-compatible.  In other words, we drop the requirement that the holonomy of $g$ lie in $\text{SU}(3)$, so that the tensors $(J, \Omega, \Upsilon)$ above need no longer be parallel. \\
\indent The generalization from $\text{SU}(3)$-holonomy to $\text{SU}(3)$-structures leads to several interesting geometries.  Indeed, many classes of $\text{SU}(3)$-structures --- such as nearly-K\"{a}hler, half-flat, and balanced --- enjoy relationships with special holonomy and physics.  For example, nearly-K\"{a}hler $\text{SU}(3)$-structures admit real Killing spinors, and hence their (appropriately scaled) Riemannian cones have $\text{G}_2$-holonomy and parallel spinors \cite{grunewald1990six, bar1993real}.  Half-flat $\text{SU}(3)$-structures yield $\text{G}_2$-manifolds via evolution equations \cite{hitchin2001stable}, and balanced $\text{SU}(3)$-structures arise in string theory via the Strominger system \cite{strominger1986superstrings, li2005existence}.  Deformations of special Lagrangian 3-folds in $6$-manifolds with torsion are studied in \cite{Salur00}. \\
\indent However, in this generalized setting, the form $\text{Re}(e^{-i\theta}\Upsilon)$ need not be a calibration, and thus SL $3$-folds of phase $\theta$ need not be volume-minimizing.  Despite this, the particular nature of the $\text{SU}(3)$-structure may nevertheless force SL $3$-folds to be minimal.  For example, when the $\text{SU}(3)$-structure on $M$ is nearly-K\"{a}hler, every SL $3$-fold $L \subset M$ is minimal as its Riemannian cone $C(L) \subset C(M)$ is homologically volume-minimizing (as $C(L)$ is a coassociative $4$-fold).  This raises the natural: \\

\noindent \textbf{Minimality Problem:} Let $M$ be a $6$-manifold.  Characterize those $\text{SU}(3)$-structures on $M$ for which every special Lagrangian $3$-fold in $M$ is a minimal submanifold of $M$. \\

\indent In this paper, we will solve the Minimality Problem by deriving a simple formula for the mean curvature of special Lagrangian $3$-folds.  Analogous formulas for submanifolds arising in $\text{G}_2$ geometry and $\text{Spin}(7)$ geometry are derived in our companion paper \cite{BaMaExcept}. \\

\indent Perhaps more fundamentally, in our generalized context special Lagrangian $3$-folds of a given phase need not exist at all, even locally.  This raises the natural: \\

\noindent \textbf{Local Existence Problem:} Let $M$ be a $6$-manifold.  Characterize those $\text{SU}(3)$-structures on $M$ for which special Lagrangian $3$-folds exist locally at every point of $M$. \\

\indent In this work, we make progress towards the resolution of the Local Existence Problem.  More precisely, we obtain new obstructions to the local existence of special Lagrangian $3$-folds.  Analogous obstructions to the local existence of coassociative 4-folds are obtained in \cite{BaMaExcept}.

\subsection{Main Results}

\indent \indent Let $(M^6, \Omega, \Upsilon)$ be a $6$-manifold with an $\text{SU}(3)$-structure $(\Omega, \Upsilon) \in \Omega^2(M) \oplus \Omega^3(M; \mathbb{C})$, and let $J$ be the underlying almost-complex structure.  The first-order local invariants of $(\Omega, \Upsilon)$ are completely encoded in seven differential forms, called the \textit{torsion forms} of the $\text{SU}(3)$-structure, denoted
$$(\tau_0, \widehat{\tau}_0, \tau_2, \widehat{\tau}_2, \tau_3, \tau_4, \tau_5) \in \Omega^0 \oplus \Omega^0 \oplus \Omega^2 \oplus \Omega^2 \oplus \Omega^3 \oplus \Omega^1 \oplus \Omega^1$$
They are defined by the equations
\begin{align*}
d\Omega & = 3\tau_0\,\text{Re}(\Upsilon) + 3\widehat{\tau}_0\,\text{Im}(\Upsilon) + \tau_3 + \tau_4 \wedge \Omega \\
d\,\text{Re}(\Upsilon) & = 2\widehat{\tau}_0\,\Omega^2 + \tau_5 \wedge \text{Re}(\Upsilon) + \tau_2 \wedge \Omega \\
d\,\text{Im}(\Upsilon) & = -2\tau_0\,\Omega^2 - J\tau_5 \wedge \text{Re}(\Upsilon) + \widehat{\tau}_2 \wedge \Omega
\end{align*}
together with certain algebraic conditions on $\tau_2, \widehat{\tau}_2, \tau_3$ that will be explained in $\S$\ref{ssect:prelim}. \\
\indent In order to study special Lagrangian $3$-folds in $M$, we will break the torsion forms into $\text{SO}(3)$-irreducible pieces with respect to a certain splitting of $TM$, where we regard $\text{SO}(3) \leq \text{SU}(3)$ as the stabilizer of a special Lagrangian $3$-plane.  Indeed, in $\S$2.3, we will decompose $\tau_0, \widehat{\tau}_0, \tau_2, \widehat{\tau}_2, \tau_3, \tau_4, \tau_5$ into $\text{SO}(3)$-irreducible components, writing
\begin{align*}
\tau_0 & = \tau_0 & \tau_2 & = (\tau_2)_1 + (\tau_2)_2 & \tau_3 & = (\tau_3)_0' + (\tau_3)_0'' + (\tau_3)_2' + (\tau_3)_2'' & \tau_4 & = (\tau_4)_{\mathsf{T}} + (\tau_4)_{\mathsf{N}} \\
\widehat{\tau}_0 & = \widehat{\tau}_0 & \widehat{\tau}_2 & = (\widehat{\tau}_2)_1 + (\widehat{\tau}_2)_2 & & & \tau_5 & = (\tau_5)_{\mathsf{T}} + (\tau_5)_{\mathsf{N}}
\end{align*}
We will refer to the individual pieces
$$\tau_0, \widehat{\tau}_0, \ \ \ (\tau_2)_1, (\tau_2)_2, (\widehat{\tau}_2)_1, (\widehat{\tau}_2)_2, \ \ \ (\tau_3)_0', (\tau_3)_0'', (\tau_3)_2', (\tau_3)_2'', \ \ \ (\tau_4)_{\mathsf{T}}, (\tau_4)_{\mathsf{N}}, \ \ \ (\tau_5)_{\mathsf{T}}, (\tau_5)_{\mathsf{N}}$$
as \textit{refined torsion forms} (with respect to a splitting of $TM$).  It turns out that the mean curvature of a special Lagrangian can be expressed purely in terms of the refined torsion forms: \\

\noindent \textbf{Theorem \ref{thm:sLagMC} (Mean Curvature):} Let $M^6$ be a $6$-manifold with an $\text{SU}(3)$-structure.  The mean curvature vector $H$ of a special Lagrangian $3$-fold of phase $\theta$ immersed in $M$ is given by
$$H = \textstyle -\frac{1}{\sqrt{2}}\cos(\theta)\,[ (\tau_2)_1 ]^\natural - \frac{1}{\sqrt{2}}\sin(\theta)\,[ (\widehat{\tau}_2)_1 ]^\natural  + \sin(\frac{\theta}{3})\,[(\tau_5)_{\mathsf{T}}]^\S - \cos(\frac{\theta}{3})\,[J(\tau_5)_{\mathsf{N}}]^\S$$
where $\natural$, $\S$ are certain isometric isomorphisms defined in (\ref{eq:SU3-NatIsom}) and (\ref{eq:RotIsom}), respectively. \\
\indent In particular, an $\text{SU}(3)$-structure on $M$ has the property that every special Lagrangian $3$-fold (of every phase) in $M$ is minimal if and only if $\tau_2 = \widehat{\tau}_2 = \tau_5 = 0$. \\

\indent This formula can be regarded as a submanifold analogue of the curvature formulas derived by Bedulli and Vezzoni \cite{MR2287296} for $6$-manifolds with $\text{SU}(3)$-structures.  We will also derive an obstruction to the local existence of special Lagrangian $3$-folds In the language of refined torsion forms: \\

\noindent \textbf{Theorem \ref{thm:sLagObs} (Local Obstruction):} If a special Lagrangian $3$-fold $\Sigma$ of phase $\theta$ exists in $M$, then the following relation holds at points of $\Sigma$:
$$\widehat{\tau}_0 \sin(\theta) + \tau_0 \cos(\theta) = \textstyle \frac{\sqrt{3}}{6} \left(  \sin(\frac{\theta}{3})  [(\tau_3)_0'']^\ddag - \cos(\frac{\theta}{3}) [(\tau_3)_0']^\dagger \displaystyle \right)$$
where $\dagger, \ddag$ are certain isometric isomorphisms defined in (\ref{eq:SU3-DagIsom}) and (\ref{eq:SU3-DDagIsom}), respectively. \\
\indent In particular, if $\tau_3 = 0$, then the phase of every special Lagrangian $3$-fold in $M$ satisfies $\tan(\theta) = -\tau_0/\widehat{\tau}_0$. \\


\noindent \textbf{Corollary \ref{cor:sLagObs}:} Fix a point $x \in M$ and a phase $\theta \in [0, 2\pi)$. If every phase $\theta$ special Lagrangian $3$-plane in $T_xM$ is tangent to a phase $\theta$ special Lagrangian $3$-fold, then $\tau_3|_x = 0$ and $\widehat{\tau}_0|_x \sin(\theta) = -\tau_0|_x \cos(\theta)$.

\subsection{Organization}

\indent \indent This work is organized in the following way.  In \S\ref{ssect:prelim}, we set our foundations, recalling the basic theory of $\text{SU}(3)$-structures on $6$-manifolds and special Lagrangian $3$-folds.  Since $\text{SO}(3) \leq \text{SU}(3)$ is the stabilizer of a special Lagrangian $3$-plane, the representation theory of $\text{SO}(3)$ plays a key role in the geometry of special Lagrangian $3$-folds.  Accordingly, we spend \S\ref{ssect:SO3reps} describing the relevant representations.  Most importantly, we will provide explicit descriptions of the $\text{SO}(3)$-irreducible components of $\Lambda^k(\mathbb{R}^6)$ and $\text{Sym}^2(\mathbb{R}^6)$. \\
\indent Then in \S\ref{ssect:sLagreftors}, we will use the machinery of \S\ref{ssect:SO3reps} to define the refined torsion forms and express them in terms of a local $\text{SO}(3)$-frame.  Finally, in \S\ref{ssect:sLagMC}, we prove Theorem \ref{thm:sLagObs}, Corollary \ref{cor:sLagObs}, and Theorem \ref{thm:sLagMC}.  After the proof of Theorem \ref{thm:sLagMC} concludes, we provide a table indicating how our results apply to a few classes of $\text{SU}(3)$-structures encountered in the literature.  \\

\noindent \textbf{Acknowledgements:} This work has benefited from conversations with Robert Bryant, Thomas Madsen, and Alberto Raffero.  The second author would also like to thank McKenzie Wang for his guidance and encouragement.  The first author thanks the Simons Collaboration on Special Holonomy in Geometry, Analysis and Physics for support during the period in which this article was written.



\section{Special Lagrangian $3$-Folds in $\text{SU}(3)$-Structures}


\subsection{Preliminaries} \label{ssect:prelim}

\indent \indent In this section, we define both the ambient spaces (in $\S$2.1.2) and submanifolds (in $\S$2.1.3) of interest.  We also use this section to fix notation and clarify conventions.

\subsubsection{$\text{SU}(3)$-Structures on Vector Spaces}

\indent \indent Let $V = \mathbb{R}^6$ equipped with the standard inner product $\langle \cdot, \cdot \rangle$ and norm $\Vert \cdot \Vert$.  Let $\{e_1, \ldots, e_6\}$ denote the standard (orthonormal) basis of $V$, and let $\{e^1, \ldots, e^6\}$ denote the corresponding dual basis of $V^*$.  We will regard $V \simeq \mathbb{C}^3$ via the complex structure $J_0$ given by
\begin{align*}
J_0e_1 & = e_4 & J_0e_2 & = e_5 & J_0e_3 & = e_6.
\end{align*}
The standard symplectic form $\Omega_0 = \langle J_0\cdot, \cdot \rangle$ and complex volume form $\Upsilon_0$ are then given by
\begin{align*}
\Omega_0 & = e^{14} + e^{25} + e^{36} \\
\Upsilon_0 & = (e^1 + ie^4) \wedge (e^2 + ie^5) \wedge (e^3 + ie^6)
\end{align*}
where we use the shorthand $e^{ij} := e^i \wedge e^j$ and $e^{ijk} := e^i \wedge e^j \wedge e^k$. Note that $\Upsilon_0$ has real and imaginary parts
\begin{align*}
\text{Re}(\Upsilon_0) & = e^{123} - e^{156} + e^{246} - e^{345} \\
\text{Im}(\Upsilon_0) & = e^{126} - e^{135} + e^{234} - e^{456}
\end{align*}
Note also that
$$\text{vol}_0 := \textstyle \frac{1}{6}\,\Omega_0^3 = \frac{i}{8}\,\Upsilon_0 \wedge \overline{\Upsilon}_0 = e^{142536}$$
is a (real) volume form $V$. \\
\indent For calculations, it will be convenient to express $\Omega_0$ and $\Upsilon_0$ in the form
\begin{align}
\Omega_0 & = \textstyle \frac{1}{2}\Omega_{ij}\,e^{ij} & \text{Re}(\Upsilon_0) & = \textstyle \frac{1}{6}\epsilon_{ijk}\,e^{ijk} & \text{Im}(\Upsilon_0) & = \textstyle \frac{1}{6}\widehat{\epsilon}_{ijk}\,e^{ijk} \label{eq:OmegaUpsilon}
\end{align}
where the constants $\Omega_{ij}, \epsilon_{ijk}, \widehat{\epsilon}_{ijk} \in \{-1,0,1\}$ are defined by the above formulas.  For example, $\Omega_{14} = -\Omega_{41} = 1$ and $\epsilon_{123} = -\epsilon_{213} = 1$.  Identities involving the $\Omega$- and $\epsilon$-symbols are given in \cite{MR2287296}.

\begin{rmk}
	The data $\langle \cdot, \cdot \rangle, J_0, \Omega_0, \Upsilon_0$ are not independent of one another, and one can recover $\langle \cdot, \cdot \rangle$ and $J_0$ from the knowledge of $\Omega_0$ and $\Upsilon_0$.  Let us be more precise. \\
	\indent In general, suppose $(g,J,\Omega, \Upsilon)$ is a quadruple on $V$ consisting of a positive-definite inner product $g$, a complex structure $J$, a non-degenerate $2$-form $\Omega$ defined by $g = \Omega(\cdot, J \cdot)$, and a complex $(3,0)$-form $\Upsilon \in \Lambda^3(V^*; \mathbb{C})$ for which $\Upsilon \wedge \overline{\Upsilon} \neq 0$.  Then $\Upsilon$ is decomposable, satisfies $\Omega \wedge \Upsilon = 0$, the $6$-form $\frac{i}{8}\Upsilon \wedge \overline{\Upsilon}$ is a real volume form, and finally
	\begin{align} \label{eq:SU3-Compat1}
	\textstyle g(X,Y)\,\frac{i}{8}\,\Upsilon \wedge \overline{\Upsilon} & = \textstyle -\frac{1}{2}\,\iota_X(\Omega) \wedge \iota_Y(\text{Re}(\Upsilon)) \wedge \text{Re}(\Upsilon).
	\end{align}
	 Conversely, let $(\Omega, \Upsilon) \in \Lambda^2(V^*) \oplus \Lambda^3(V^*; \mathbb{C})$ be a pair consisting of a non-degenerate $2$-form $\Omega$ and a decomposable complex $3$-form $\Upsilon$ satisfying $\Upsilon \wedge \overline{\Upsilon} \neq 0$ and $\Omega \wedge \Upsilon = 0$. Then one can recover $(g,J)$ via
	\begin{subequations} \label{eq:SU3-Compat2}
		\begin{align} 
		\textstyle \iota_{JX}(\frac{i}{8}\,\Upsilon \wedge \overline{\Upsilon}) & = -\textstyle \frac{1}{2}\,\iota_X(\text{Re}(\Upsilon)) \wedge \text{Re}(\Upsilon)  \\
		g(X,Y) & = \Omega(X,JY).
		\end{align}
	\end{subequations}
	For a proof, see \cite{schulte2010half}. 
\end{rmk}


\indent We always equip $\Lambda^k(V^*)$ with the usual inner product, also denoted $\langle \cdot, \cdot \rangle$, given by declaring
\begin{equation} \label{eq:SU3-LambdaMetric}
\{e^I \colon I \text{ increasing multi-index}\}
\end{equation}
to be an orthonormal basis.  We let $\Vert \cdot \Vert$ denote the corresponding norm.  We will also need both the orthogonal and symplectic Hodge star operators.  These are the respective operators $\ast, \star \colon \Lambda^k(V^*) \to \Lambda^{6-k}(V^*)$ such that every $\alpha, \beta \in \Lambda^k(V^*)$ satisfy
\begin{align*}
\alpha \wedge \ast \beta & = \langle \alpha, \beta \rangle\,\text{vol}_0 & \alpha \wedge \star \beta & = \Omega_0(\alpha, \beta)\,\text{vol}_0
\end{align*}

\indent We view $V \simeq \mathbb{R}^6$ as the standard $\text{SU}(3)$-representation.  This $\text{SU}(3)$-representation is irreducible.  However, the induced $\text{SU}(3)$-representations on $\Lambda^k(V^*)$ for $2 \leq k \leq 5$ are not irreducible.  Indeed, $\Lambda^2(V^*)$ decomposes into irreducible $\text{SU}(3)$-modules as
$$\Lambda^2(V^*) = \mathbb{R}\,\Omega_0 \oplus \Lambda^2_6 \oplus \Lambda^2_8,$$
where
\begin{align*}
\Lambda^2_6 & = \{\iota_X(\text{Re}(\Upsilon_0)) \colon X \in V\} = \{\star(\alpha \wedge \text{Re}(\Upsilon_0)) \colon \alpha \in \Lambda^1\} \\
\Lambda^2_8 & = \{\beta \in \Lambda^2 \colon \beta \wedge \text{Re}(\Upsilon_0) = 0 \text{ and } \star\!\beta = -\beta \wedge \Omega_0\}
\end{align*}
Similarly, $\Lambda^3(V^*)$ decomposes into irreducible $\text{SU}(3)$-modules as
$$\Lambda^3(V^*) = \mathbb{R}\,\text{Re}(\Upsilon_0) \oplus \mathbb{R}\,\text{Im}(\Upsilon_0) \oplus \Lambda^3_6 \oplus \Lambda^3_{12}$$
where
\begin{align*}
\Lambda^3_6 & = \{\alpha \wedge \Omega_0 \colon \alpha \in \Lambda^1\} = \{\gamma \in \Lambda^3 \colon \star \!\gamma = \gamma\} \\
\Lambda^3_{12} & = \{\gamma \in \Lambda^3 \colon \gamma \wedge \Omega_0 = 0 \text{ and } \gamma \wedge \text{Re}(\Upsilon_0) = \gamma \wedge \text{Im}(\Upsilon_0) = 0\}.
\end{align*}
In each case, $\Lambda^k_\ell$ is an irreducible $\text{SU}(3)$-module of dimension $\ell$.  One can obtain similar decompositions of $\Lambda^4(V^*)$ and $\Lambda^5(V^*)$ by applying the orthogonal Hodge star operator. \\
\indent The $\text{SU}(3)$-module $\text{Sym}^2(V^*)$ is also reducible, splitting as
$$\text{Sym}^2(V^*) = \mathbb{R} \,\text{Id} \oplus \text{Sym}^2_+ \oplus \text{Sym}^2_-$$
where
\begin{align*}
\text{Sym}^2_+ & = \{h \in \text{Sym}^2(V^*) \colon J_0h = h,\, \text{tr}(h) = 0\} \\
\text{Sym}^2_- & = \{h \in \text{Sym}^2(V^*) \colon J_0h = -h\}.
\end{align*}
where here and in the rest of the paper we use the metric $g_0$ to identify $\text{Sym}^2(V^*)$ with the space of symmetric matrices. In \cite{MR2287296}, the authors note that the maps
\begin{align}
\rho \colon \text{Sym}^2_+ & \to \Lambda^2_8 & \chi \colon \text{Sym}^2_- & \to \Lambda^3_{12} \label{eq:rho-chi} \\
\rho(h_{ij}e^ie^j) & = h_{ik}\Omega_{kj}\,e^{ij} & \chi(h_{ij}e^i e^j) & = h_{i\ell} \epsilon_{\ell jk}\,e^{ijk} \notag
\end{align}
are $\text{SU}(3)$-module isomorphisms.  These isomorphisms will be crucial to our calculations in $\S$\ref{ssect:SO3reps}.

\subsubsection{$\text{SU}(3)$-Structures on $6$-Manifolds}

\begin{defn}
	Let $M$ be an oriented $6$-manifold.  An \textit{$\text{SU}(3)$-structure} on $M$ is a pair $(\Omega, \Upsilon)$ consisting of a non-degenerate $2$-form $\Omega \in \Omega^2(M)$ and a complex $3$-form $\Upsilon \in \Omega^3(M; \mathbb{C})$ such that at each $x \in M$, there exists a coframe $u \colon T_xM \to \mathbb{R}^6$ for which $\Omega|_x = u^*(\Omega_0)$ and $\Upsilon|_x = u^*(\Upsilon_0)$.
\end{defn}

Intuitively, an $\text{SU}(3)$-structure is a smooth identification of each tangent space $T_xM$ with $\mathbb{C}^3$ in such a way that $(\Omega|_x, \Upsilon|_x)$ is aligned with $(\Omega_0, \Upsilon_0)$.  We note that a $6$-manifold $M$ admits an $\text{SU}(3)$-structure if and only if it is orientable and spin \cite[Chapter 9]{LaMiSpin89}.

Every $\text{SU}(3)$-structure $(\Omega, \Upsilon)$ on $M$ induces a Riemannian metric $g$ and an almost-complex structure $J$ on $M$ via the formulas (\ref{eq:SU3-Compat1})-(\ref{eq:SU3-Compat2}), reflecting the inclusion $\text{SU}(3) \leq \text{SO}(6) \cap \text{GL}_3(\mathbb{C})$.  We emphasize that, in general, $J$ need not be integrable, and $\Omega$ need not be closed.  We also caution that the association $(\Omega, \Upsilon) \mapsto g$ is not injective. \\

The first-order local invariants of an $\text{SU}(3)$-structure are completely encoded in a certain $\text{SU}(3)$-equivariant function
$$T \colon F_{\text{SU}(3)} \to \Lambda^0 \oplus \Lambda^0 \oplus \Lambda^2_8 \oplus \Lambda^2_8 \oplus \Lambda^3_{12} \oplus \Lambda^1 \oplus \Lambda^1 \simeq \mathbb{R}^{42}$$
called the \textit{intrinsic torsion function}, defined on the total space of the $\text{SU}(3)$-frame bundle $F_{\text{SU}(3)} \to M$ over $M$.  We think of $T$ as describing the $1$-jet of the $\text{SU}(3)$-structure. \\
\indent The intrinsic torsion function is somewhat technical to define --- the interested reader can find more information in \cite{MR1062197} and \cite{MR1004008} --- but several equivalent reformulations are available.  Most conveniently for our purposes: the intrinsic torsion function of a $\text{SU}(3)$-structure is equivalent to the data of the $3$-form $d\Omega$ and the complex $4$-form $d\Upsilon$. \\
\indent In \cite{MR2287296}, the exterior derivatives of $\Omega$ and $\Upsilon$ are shown to take the form
\begin{align*}
d\Omega & = 3\tau_0\,\text{Re}(\Upsilon) + 3\widehat{\tau}_0\,\text{Im}(\Upsilon) + \tau_3 + \tau_4 \wedge \Omega \\
d\,\text{Re}(\Upsilon) & = 2\widehat{\tau}_0\,\Omega^2 + \tau_5 \wedge \text{Re}(\Upsilon) + \tau_2 \wedge \Omega \\
d\,\text{Im}(\Upsilon) & = -2\tau_0\,\Omega^2 - J\tau_5 \wedge \text{Re}(\Upsilon) + \widehat{\tau}_2 \wedge \Omega
\end{align*}
where
$$(\tau_0, \widehat{\tau}_0, \tau_2, \widehat{\tau}_2, \tau_3, \tau_4, \tau_5) \in \Gamma(\Lambda^0 \oplus \Lambda^0 \oplus \Lambda^2_8 \oplus \Lambda^2_8 \oplus \Lambda^3_{12} \oplus \Lambda^1 \oplus \Lambda^1)$$
and we are abbreviating $\Lambda^k_\ell := \Lambda^k_\ell(T^*M)$, etc.  We refer to $\tau_0, \widehat{\tau}_0, \tau_2, \widehat{\tau}_2, \tau_3, \tau_4, \tau_5$ as the \textit{torsion forms} of the $\text{SU}(3)$-structure.

Following standard conventions, we let $X_0^+, X_0^-, X_2^+, X_2^-, X_3, X_4, X_5$ denote the vector bundles $\Lambda^0, \Lambda^0, \Lambda^2_8, \Lambda^2_8, \Lambda^3_{12}, \Lambda^1, \Lambda^1$, respectively.  
In the sequel, we will say (for example) that an $\text{SU}(3)$-structure belongs to torsion class $X_0^+ \oplus X_0^- \oplus X_2^+ \oplus X_2^-$ if and only if $\tau_3 = \tau_4 = \tau_5 = 0$, etc.

\subsubsection{Special Lagrangian $3$-Folds}

\indent \indent Let $(M^6, \Omega, \Upsilon)$ be a $6$-manifold with an $\text{SU}(3)$-structure, and fix a tangent space $(T_xM, \Omega|_x, \Upsilon|_x) \simeq (V, \Omega_0, \Upsilon_0)$.  In their work on calibrations, Harvey and Lawson \cite{harvey1982calibrated} observed that the vector space $(V, \Omega_0, \Upsilon_0)$ possesses an $\mathbb{S}^1$-family of distinguished classes of $3$-dimensional subspaces --- the special Lagrangian $3$-planes of a given phase --- which we now describe.

For $\theta \in [0,2\pi)$, consider the complex $3$-form $\Upsilon_\theta \in \Lambda^3(V^*; \mathbb{C})$ defined by
$$\Upsilon_\theta := e^{-i\theta}\Upsilon_0.$$
We refer to its real part
$$\text{Re}(\Upsilon_\theta) = \text{Re}(e^{-i\theta}\Upsilon_0) \in \Lambda^3(V^*)$$
as the \textit{phase $\theta$ special Lagrangian $3$-form}, following the sign convention of \cite{joyce2007riemannian} (rather than \cite{harvey1982calibrated}).  Note that $\text{Im}(\Upsilon_\theta) = \text{Re}(\Upsilon_{\theta + \frac{\pi}{2}})$, where $\theta + \frac{\pi}{2}$ is regarded mod $2\pi$.  The $3$-forms $\Upsilon_\theta$ enjoy the following remarkable property:

\begin{prop}[\cite{harvey1982calibrated}]\label{prop:sLagcomass}
	 For each $\theta \in [0,2\pi)$, the $3$-form $\mathrm{Re}(\Upsilon_\theta)$ has co-mass one, meaning that:
	$$\left|\mathrm{Re}(\Upsilon_\theta)(x,y,z)\right| \leq 1$$
	for every orthonormal set $\{x,y,z\}$ in $V \simeq \mathbb{R}^6$.
\end{prop}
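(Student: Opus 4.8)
The plan is to reduce the statement to a single complex-linear-algebra inequality and then invoke Hadamard's inequality. First I would observe that for any real vectors $x,y,z$ one has $\mathrm{Re}(\Upsilon_\theta)(x,y,z) = \mathrm{Re}\!\left(e^{-i\theta}\Upsilon_0(x,y,z)\right)$, whence $\left|\mathrm{Re}(\Upsilon_\theta)(x,y,z)\right| \le \left|\Upsilon_0(x,y,z)\right|$ uniformly in $\theta$. Thus it suffices to prove the phase-independent bound $\left|\Upsilon_0(x,y,z)\right| \le 1$ for every orthonormal set $\{x,y,z\}$.

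Next I would make the identification $V \simeq \mathbb{C}^3$ explicit. Writing $\zeta^a := e^a + i\,e^{a+3}$ for $a \in \{1,2,3\}$, the formula in the excerpt reads $\Upsilon_0 = \zeta^1 \wedge \zeta^2 \wedge \zeta^3$. For real vectors $v_1,v_2,v_3$, the standard determinant formula for a wedge of three $1$-forms gives $\Upsilon_0(v_1,v_2,v_3) = \det\big(\zeta^a(v_b)\big)_{a,b}$. Since $\zeta^a(v) = \langle e^a, v\rangle + i\langle e^{a+3}, v\rangle$ is precisely the $a$-th complex coordinate $\hat v_a$ of $v$ under $V \simeq \mathbb{C}^3$, this says $\Upsilon_0(x,y,z) = \det M$, where $M = [\,\hat x \mid \hat y \mid \hat z\,]$ is the $3\times 3$ complex matrix whose columns are the complex-coordinate representatives of $x,y,z$.

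The key linear-algebra input is the relation between the Euclidean inner product on $\mathbb{R}^6$ and the Hermitian inner product on $\mathbb{C}^3$. A direct computation shows $\mathrm{Re}\langle \hat u, \hat v\rangle_{\mathbb{C}} = \langle u, v\rangle_{\mathbb{R}}$; in particular a unit vector $v \in \mathbb{R}^6$ has Hermitian-unit image $\hat v \in \mathbb{C}^3$. Hence the three columns of $M$ are unit vectors in $(\mathbb{C}^3, \langle\cdot,\cdot\rangle_{\mathbb{C}})$, and Hadamard's inequality yields $\left|\det M\right| \le \|\hat x\|_{\mathbb{C}}\,\|\hat y\|_{\mathbb{C}}\,\|\hat z\|_{\mathbb{C}} = 1$. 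Combined with the first step, this gives $\left|\mathrm{Re}(\Upsilon_\theta)(x,y,z)\right| \le 1$. To see that the comass is exactly one, I would exhibit an orthonormal triple attaining equality: replacing $e_1$ by $\cos\theta\,e_1 + \sin\theta\,e_4$ (whose complex coordinate is $e^{i\theta}$) and keeping $e_2, e_3$ gives $\det M = e^{i\theta}$, so $\mathrm{Re}(\Upsilon_\theta)(x,e_2,e_3) = \mathrm{Re}(e^{-i\theta}e^{i\theta}) = 1$.

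I expect the only genuinely delicate point to be the bookkeeping in the second step: pinning down the determinant identity $\Upsilon_0(x,y,z) = \det M$ with the paper's sign and ordering conventions, and verifying the real-versus-Hermitian inner product identity. Everything else is a direct appeal to Hadamard's inequality, which is where the geometric content — that orthonormality forces unit Hermitian columns — actually enters. I note in passing that the argument proves the stronger fact that $\left|\Upsilon_0(x,y,z)\right| \le 1$ already for unit vectors $x,y,z$, orthogonality not being needed for the inequality itself.
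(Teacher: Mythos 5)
Your proof is correct: the reduction $\left|\mathrm{Re}\bigl(e^{-i\theta}\Upsilon_0(x,y,z)\bigr)\right| \le \left|\Upsilon_0(x,y,z)\right|$, the determinant identity $\Upsilon_0(x,y,z) = \det M$ with columns the $\mathbb{C}^3$-coordinates of $x,y,z$ (consistent with the paper's conventions, since $\mathrm{Re}(\Upsilon_0)(e_1,e_2,e_3)=1$), the identity $\mathrm{Re}\langle \hat u, \hat v\rangle_{\mathbb{C}} = \langle u, v\rangle_{\mathbb{R}}$ making the columns Hermitian-unit, and the appeal to Hadamard's inequality all check out, and your triple $\{\cos\theta\,e_1+\sin\theta\,e_4,\,e_2,\,e_3\}$ does attain the bound, justifying that the comass is exactly one. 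The paper gives no proof of its own here --- the proposition is quoted directly from Harvey--Lawson --- and your argument is essentially the classical one from that source (including your closing observation that orthogonality is not needed, only unit norms), so the approaches coincide.
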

In light of this proposition, it is natural to examine more closely those $3$-planes $E \in \text{Gr}_3(V)$ for which $\left|\text{Re}(\Upsilon_\theta)(E)\right| = 1$.
\begin{prop}[\cite{harvey1982calibrated}]\label{prop:sLagdefn}
	Let $E \in \text{Gr}_3(V)$ be a $3$-plane in $V$.  The following are equivalent: \\
	\indent (i) If $\{u,v,w\}$ is an orthonormal basis of $E$, then $\text{Re}(\Upsilon_\theta)(u,v,w) = \pm 1$. \\
	\indent (ii) $E$ is Lagrangian and $\left.\text{Im}(\Upsilon_\theta)\right|_E = 0$. \\
	\noindent If either of these conditions hold, we say that $E$ is \emph{special Lagrangian (SL) of phase $\theta$}.
\end{prop}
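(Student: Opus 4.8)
The plan is to reduce the entire statement to the behaviour of the single complex number $\Upsilon_\theta(u,v,w)$, where $\{u,v,w\}$ is an oriented orthonormal basis of $E$ with associated unit $3$-vector $\xi = u \wedge v \wedge w$. Writing $\Upsilon_\theta(u,v,w) = a + ib$, conditions (i) and (ii) become statements about $a = \text{Re}(\Upsilon_\theta)(u,v,w)$ and $b = \text{Im}(\Upsilon_\theta)(u,v,w)$: condition (i) asserts $a = \pm 1$, while the second half of (ii) asserts $b = 0$. Changing the orthonormal basis multiplies $\xi$ by $\det \in \{\pm 1\}$, so both ``$a = \pm 1$'' and the real number $|\Upsilon_0(\xi)| = |\Upsilon_\theta(\xi)|$ are well-defined invariants of $E$, and the whole proposition will follow once these are understood.

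For the implication (ii) $\Rightarrow$ (i), suppose $E$ is Lagrangian. Then orthonormal $u,v \in E$ satisfy $\langle u,v\rangle = 0$ and $\Omega_0(u,v) = \langle J_0 u, v\rangle = 0$, so $\{u,v,w\}$ is in fact a \emph{unitary} basis of $V \simeq \mathbb{C}^3$. Since $\Upsilon_0 = (e^1 + ie^4)\wedge(e^2+ie^5)\wedge(e^3+ie^6)$ is the complex determinant form, $\Upsilon_0(u,v,w) = \det_{\mathbb{C}}[\,u\,|\,v\,|\,w\,] = e^{i\alpha}$ has unit modulus, where $\alpha$ is the Lagrangian angle. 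Hence $\Upsilon_\theta(u,v,w) = e^{i(\alpha-\theta)}$, and the hypothesis $\text{Im}(\Upsilon_\theta)|_E = 0$ forces $\alpha - \theta \equiv 0 \pmod{\pi}$, whence $\text{Re}(\Upsilon_\theta)(u,v,w) = \cos(\alpha-\theta) = \pm 1$, which is (i).

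For the reverse implication (i) $\Rightarrow$ (ii), I would first extract the vanishing of the imaginary part for free from Proposition \ref{prop:sLagcomass}. Replacing $\theta$ by $\theta + s$ gives $\text{Re}(\Upsilon_{\theta+s})(\xi) = a\cos s + b\sin s$, and taking the supremum over $s$ of the comass-one bound at each phase yields $\sqrt{a^2 + b^2} \le 1$. Combined with $a = \pm 1$ from (i), this forces $b = 0$, i.e. $\text{Im}(\Upsilon_\theta)|_E = 0$, and simultaneously $|\Upsilon_0(\xi)| = 1$. It then remains to show that $|\Upsilon_0(\xi)| = 1$ implies $E$ is Lagrangian, for which I would prove the sharper pointwise identity
$$|\Upsilon_0(\xi)|^2 + \|\Omega_0|_E\|^2 = 1,$$
from which the conclusion is immediate since $\Omega_0|_E = 0$ is exactly the Lagrangian condition.

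The main obstacle is precisely this identity, which is the geometric heart of the matter (and which in fact contains Proposition \ref{prop:sLagcomass} as a corollary, via $|\text{Re}(\Upsilon_\theta)(\xi)| \le |\Upsilon_0(\xi)| \le 1$). I would establish it by bringing $E$ into canonical form. Since $\text{U}(3)$ preserves $\Omega_0$ and preserves $\Upsilon_0$ up to a unit-modulus scalar, both $|\Upsilon_0(\xi)|$ and $\|\Omega_0|_E\|$ are $\text{U}(3)$-invariant. As $E$ is odd-dimensional, $\Omega_0|_E$ is degenerate; choosing a unit vector $w$ in its kernel and rotating $w \mapsto e_1$ forces $J_0 w = e_4 \perp E$, so the orthogonal complement of $w$ in $E$ is a $2$-plane $P \subset \text{span}(e_2,e_3,e_5,e_6) \simeq \mathbb{C}^2$. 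The residual $\text{U}(2)$ then brings $P$ to the normal form $\text{span}\{e_2,\ \cos\phi\, e_5 + \sin\phi\, e_3\}$, where $\phi \in [0,\tfrac{\pi}{2}]$ is the Kähler angle and $\cos\phi = \|\Omega_0|_E\|$. A direct evaluation using $\Upsilon_0(e_1,e_2,e_3) = 1$ and $\Upsilon_0(e_1,e_2,e_5) = 0$ gives $|\Upsilon_0(\xi)| = \sin\phi$, and the identity follows from $\sin^2\phi + \cos^2\phi = 1$. The only points requiring care are the justification of the canonical form (transitivity of $\text{U}(2)$ on $2$-planes of fixed Kähler angle) and the bookkeeping of orientations and phases, all of which are routine.
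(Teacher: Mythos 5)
Your proof is correct, but be aware that the paper contains no argument to compare against: Proposition \ref{prop:sLagdefn} is stated as a known result and cited to Harvey--Lawson \cite{harvey1982calibrated}, so your write-up reconstructs the classical proof rather than paralleling anything in the text. Your reconstruction is sound. The direction (ii) $\Rightarrow$ (i) --- a real-orthonormal basis of a Lagrangian plane is Hermitian-orthonormal, hence $\Upsilon_0(u,v,w) = \det_{\mathbb{C}}[\,u\,|\,v\,|\,w\,]$ is a unit complex number --- is the standard argument. In the direction (i) $\Rightarrow$ (ii), the phase-rotation trick legitimately extracts $\mathrm{Im}(\Upsilon_\theta)|_E = 0$ from Proposition \ref{prop:sLagcomass} (no circularity, since that proposition is an independent, previously stated result), and your identity $|\Upsilon_0(\xi)|^2 + \Vert \Omega_0|_E \Vert^2 = 1$ is true for $3$-planes in $\mathbb{C}^3$ and correctly proved: $\Omega_0|_E$ is skew on an odd-dimensional space, so it has rank at most $2$ and a kernel vector exists, and the $\mathrm{U}(3)$-reduction to $\mathrm{span}\{e_1,\, e_2,\, \cos\phi\, e_5 + \sin\phi\, e_3\}$ is the standard K\"ahler-angle normal form (the transitivity fact you defer is indeed routine). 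For comparison, the shortest classical route runs through Hadamard's inequality: for real-orthonormal $u,v,w$ the complex coordinate vectors are unit vectors, so $|\Upsilon_0(u,v,w)| = |\det_{\mathbb{C}}[\,u\,|\,v\,|\,w\,]| \le 1$, with equality if and only if the columns are pairwise Hermitian-orthogonal, which given real orthonormality is exactly the Lagrangian condition $\Omega_0|_E = 0$; then (i) forces $1 = |\mathrm{Re}(\Upsilon_\theta)(\xi)| \le |\Upsilon_\theta(\xi)| \le 1$, so both inequalities are equalities, giving the Lagrangian condition and $\mathrm{Im}(\Upsilon_\theta)|_E = 0$ simultaneously, with no appeal to Proposition \ref{prop:sLagcomass} or to normal forms. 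What your longer route buys is the sharper pointwise identity itself, which quantifies the comass defect by the K\"ahler angle and, as you observe, yields Proposition \ref{prop:sLagcomass} as a corollary --- more than the proposition asks for, but correct.
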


Note that every Lagrangian $3$-plane is special Lagrangian for some phase $\theta$.  Note also that the $\mathbb{S}^1$-action on $V = \mathbb{R}^6 \simeq \mathbb{C}^3$ given by
$$e^{i\vartheta} \cdot (z_1, z_2, z_3) := (e^{i\vartheta}z_1, e^{i\vartheta}z_2, e^{i\vartheta}z_3),$$
induces a ``change-of-phase" $\mathbb{S}^1$-action on $\text{Lag}(V) = \{E \in \text{Gr}_3(V) \colon E \text{ Lagrangian}\}$.   Explicitly, letting $\{e_1, \ldots, e_6\}$ denote the standard $\mathbb{R}$-basis of $V$, and letting
\begin{subequations} \label{eq:vw-vectors} 
\begin{align}
v_1(\theta) &  = \textstyle \cos(\frac{\theta}{3}) e_1 + \sin(\frac{\theta}{3}) e_4 & w_1(\theta) & \textstyle = -\sin(\frac{\theta}{3}) e_1 + \cos(\frac{\theta}{3}) e_4 \\
v_2(\theta) & = \textstyle \cos(\frac{\theta}{3}) e_2 + \sin(\frac{\theta}{3}) e_5 & w_2(\theta) & \textstyle = -\sin(\frac{\theta}{3}) e_2 + \cos(\frac{\theta}{3}) e_5 \\
v_3(\theta) & = \textstyle \cos(\frac{\theta}{3}) e_3 + \sin(\frac{\theta}{3}) e_6 & w_3(\theta) & \textstyle = -\sin(\frac{\theta}{3}) e_3 + \cos(\frac{\theta}{3}) e_6 
\end{align}
\end{subequations}
the set $\{v_1(\theta), v_2(\theta), v_3(\theta)\}$ is an oriented basis for the SL $3$-plane $e^{i\theta/3} \cdot \text{span}(e_1, e_2, e_3)$ of phase $\theta$, and $\{w_1(\theta), w_2(\theta), w_3(\theta)\}$ is an oriented basis for the SL $3$-plane $e^{i\theta/3} \cdot \text{span}(e_4, e_5, e_6)$ of phase $\theta + \frac{3\pi}{2}$. \\

\indent Now, the $\text{SU}(3)$-action on $V$ induces an $\text{SU}(3)$-action on $\text{Gr}_3(V)$.  This action  on $\text{Gr}_3(V)$ is not transitive: for example, the subset of $\text{Gr}_3(V)$ consisting of special Lagrangian $3$-planes of a fixed phase $\theta$ is an $\text{SU}(3)$-orbit.  The corresponding stabilizer will play a crucial role in this section:

\begin{prop}[\cite{harvey1982calibrated}]
	Fix $\theta \in [0,2\pi)$. The Lie group $\textup{SU}(3)$ acts transitively on the subset of special Lagrangian $3$-planes of phase $\theta$:
	\begin{align*}
	\{E \in \textup{Gr}_3(V) \colon |\textup{Re}(\Upsilon_\theta)(E)| = 1\} \subset \textup{Gr}_3(V)
	\end{align*}
	The stabilizer of the $\textup{SU}(3)$-action is isomorphic to $\textup{SO}(3)$.
\end{prop}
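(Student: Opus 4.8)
The plan is to reduce to the case $\theta = 0$ and then argue directly with the complex-linear structure. Fix the \emph{standard} phase-$0$ special Lagrangian $3$-plane $E_0 = \text{span}_{\mathbb{R}}(e_1, e_2, e_3)$, the real points of $\mathbb{C}^3$; one checks immediately that $\Omega_0|_{E_0} = 0$, $\text{Re}(\Upsilon_0)(e_1,e_2,e_3) = 1$, and $\text{Im}(\Upsilon_0)|_{E_0} = 0$, so $E_0$ is phase-$0$ SL. To handle general $\theta$, I would use the change-of-phase map $E \mapsto e^{i\theta/3} \cdot E$, which carries $E_0$ to the phase-$\theta$ plane $e^{i\theta/3}\cdot E_0$ recorded in (\ref{eq:vw-vectors}) and, more generally, restricts to a bijection from phase-$0$ SL $3$-planes onto phase-$\theta$ SL $3$-planes, since $\Upsilon_\theta(e^{i\theta/3}u_1, e^{i\theta/3}u_2, e^{i\theta/3}u_3) = \Upsilon_0(u_1,u_2,u_3)$. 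Because $e^{i\theta/3}\,\text{Id}$ is a central element of $\text{U}(3)$, it commutes with every $g \in \text{SU}(3)$, so this bijection is $\text{SU}(3)$-equivariant; consequently both transitivity and the stabilizer are inherited from the $\theta = 0$ case, and it suffices to treat $\theta = 0$.

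For transitivity at $\theta = 0$, let $E$ be any phase-$0$ SL $3$-plane and choose an orthonormal basis $\{u_1, u_2, u_3\}$ oriented so that $\text{Re}(\Upsilon_0)(u_1,u_2,u_3) = +1$ (possible by Proposition \ref{prop:sLagdefn}, after reversing one vector if necessary). The key observation is that the Lagrangian condition $\Omega_0|_E = 0$ reads $\langle J_0 u_i, u_j \rangle = 0$ for all $i,j$, so $J_0 E \perp E$; combined with real-orthonormality and $\dim E = 3$, this shows that $\{u_1, u_2, u_3\}$ is in fact a \emph{Hermitian}-orthonormal $\mathbb{C}$-basis of $\mathbb{C}^3 = V$. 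Hence the complex matrix $A$ whose columns are the complex-coordinate vectors of $u_1, u_2, u_3$ lies in $\text{U}(3)$ and satisfies $A e_j = u_j$, so $A \cdot E_0 = E$. Finally, since $\Upsilon_0 = (e^1+ie^4)\wedge(e^2+ie^5)\wedge(e^3+ie^6)$ evaluates on a triple of complex vectors as the determinant of the matrix of their complex components, we obtain $\det_{\mathbb{C}} A = \Upsilon_0(u_1, u_2, u_3) = \text{Re}(\Upsilon_0)(u_1,u_2,u_3) + i\,\text{Im}(\Upsilon_0)(u_1,u_2,u_3) = 1$, whence $A \in \text{SU}(3)$. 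Thus $\text{SU}(3)$ acts transitively.

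It remains to identify the stabilizer $H = \{g \in \text{SU}(3) : g \cdot E_0 = E_0\}$. If $g \in H$ then each $g e_j \in E_0 = \mathbb{R}^3$, so the columns of $g$ in the standard complex basis are real; since $g$ is $\mathbb{C}$-linear, its matrix is therefore a real matrix. A real matrix is unitary precisely when it is orthogonal, and for a real matrix the complex determinant coincides with the real determinant, so the constraints $g \in \text{U}(3)$ and $\det_{\mathbb{C}} g = 1$ become $g \in \text{O}(3)$ and $\det_{\mathbb{R}} g = 1$, i.e.\ $g \in \text{SO}(3)$. Conversely, any $g \in \text{SO}(3)$, extended $\mathbb{C}$-linearly, is a real orthogonal matrix of real determinant $1$, hence lies in $\text{SU}(3)$ and preserves $E_0$. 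Therefore $H \cong \text{SO}(3)$.

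The genuinely substantive step is the second paragraph's observation that a \emph{real}-orthonormal basis of a Lagrangian $3$-plane is automatically a \emph{Hermitian}-orthonormal $\mathbb{C}$-basis of $V \cong \mathbb{C}^3$, together with the identification of $\Upsilon_0$ with the complex determinant; everything else is bookkeeping about real versus complex linear maps. I would expect the main care to be needed in handling the orientation and phase conventions so that $\det_{\mathbb{C}} A$ comes out to be exactly $+1$ rather than an arbitrary unit-modulus complex number.
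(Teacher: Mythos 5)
Your proof is correct. The paper offers no proof of its own — it simply cites Harvey--Lawson — and your argument (reduction to phase $0$ via the central rotation $e^{i\theta/3}\,\text{Id}$, then the observation that a real-orthonormal basis of a Lagrangian $3$-plane is automatically Hermitian-orthonormal so that the plane is $A \cdot E_0$ for $A \in \text{U}(3)$ with $\det_{\mathbb{C}} A = \Upsilon_0(u_1,u_2,u_3)$ pinning down the phase) is precisely the standard Harvey--Lawson argument, carried out correctly, including the identification of the stabilizer with the real points $\text{SO}(3) \subset \text{SU}(3)$.
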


We may finally define our primary objects of interest:

\begin{defn}
	Let $(M^6, \Omega, \Upsilon)$ be a $6$-manifold equipped with an $\text{SU}(3)$-structure $(\Omega, \Upsilon)$.  Identify each tangent space $(T_xM, \Omega|_x, \Upsilon|_x) \simeq (V, \Omega_0, \Upsilon_0)$.  Fix $\theta \in [0,2\pi)$. A \textit{special Lagrangian $3$-fold of phase $\theta$} in $M$ is a $3$-dimensional immersed submanifold $\Sigma \subset M$ for which each tangent space $T_x\Sigma \subset T_xM$ is a special Lagrangian $3$-plane of phase $\theta$.
\end{defn}

Note that if $d(\text{Re}(\Upsilon)) = 0$, then $\text{Re}(\Upsilon)$ is a calibration whose calibrated $3$-planes are the special Lagrangian $3$-planes of phase $0$.  Thus, in this case, the phase 0 special Lagrangian $3$-folds are calibrated submanifolds, and hence are minimal submanifolds of $M$. Similarly, if $d(\text{Im}(\Upsilon)) = 0$, then $\text{Im}(\Upsilon)$ is a calibration, so the phase $\frac{\pi}{2}$ special Lagrangian $3$-folds are calibrated submanifolds of $M$.

\subsection{Some $\text{SO}(3)$-Representation Theory}\label{ssect:SO3reps}

\indent\indent Let the group $\text{SO}(3)$ act on $\R^3 = \text{span} \lbrace x, y, z \rbrace$ in the usual way. This action extends to an action of $\text{SO}(3)$ on the polynomial ring $\R [x,y,z]$. Let $\Vc_n \subset \R [x,y,z]$ be the $\text{SO}(3)$-submodule of homogeneous polynomials of degree $n$, and let $\Hc_n \subset \Vc_n$ denote the $\text{SO}(3)$-submodule of harmonic polynomials of degree $n$, i.e. the space of degree $n$ homogeneous polynomials $P$ satisfying $\Delta P = 0,$ an irreducible $\text{SO}(3)$-module of dimension $2n+1$. Every finite dimensional irreducible $\text{SO}(3)$-module is isomorphic to $\Hc_n$ for some $n$. \\
\indent The Clebsch-Gordan formula \cite[Chapter 5]{BrotD85} gives the decomposition of a tensor product of irreducible $\text{SO}(3)$-modules:
\begin{equation}\label{eq:ClebschGordon}
\Hc_a \otimes \Hc_b \cong \Hc_{a+b} \oplus \Hc_{a+b-1} \oplus \cdots \oplus \Hc_{\lvert a-b \rvert}.
\end{equation}
\noindent Only $\mathcal{H}_0$, $\mathcal{H}_1$, and $\mathcal{H}_2$ will play a role in this work.

\subsubsection{$\text{SO}(3)$ as a subgroup of $\text{SU}(3)$}

\indent \indent In our calculations, we shall need a concrete realization of $\text{SO}(3)$ as the stabilizer of a special Lagrangian plane. Let $\text{SO}(3)$ act on $V \cong \R^6$ via the identification $V \cong \Hc_1 \oplus \Hc_1,$ and let $e_1, \ldots, e_6$ be an orthonormal basis of $V$ such that:
\begin{itemize}
\item $\left\langle e_1, e_2, e_3 \right\rangle \cong \Hc_1$ and $\left\langle e_4, e_5, e_6 \right\rangle \cong \Hc_1$,
\item The map $e_i \mapsto e_{i+3}$ is $\text{SO}(3)$-equivariant.
\end{itemize}
Then the following forms are invariant under the $\text{SO}(3)$-action on $V$:
\begin{align*}
& e^{14} + e^{25} + e^{36}, \\
& (e^1 + ie^4) \wedge (e^2 + ie^5) \wedge (e^3 + ie^6).
\end{align*}
Thus, the action of $\text{SO}(3)$ on $V$ gives an embedding $\text{SO}(3) \subset \text{SU}(3)$. The 3-plane 
\begin{align*}
\left\langle v_1(\theta), v_2(\theta), v_3(\theta) \right\rangle = \textstyle\left\langle \cos (\frac{\theta}{3}) e_1 + \sin (\frac{\theta}{3}) e_4,\, \cos (\frac{\theta}{3}) e_2 + \sin (\frac{\theta}{3}) e_5 , \,\cos (\frac{\theta}{3}) e_3 + \sin (\frac{\theta}{3}) e_6 \right\rangle
\end{align*}
is special Lagrangian with phase $\theta$ and is preserved by the action of $\text{SO}(3)$.

\subsubsection{Decomposition of the 1-forms on $V$}

\indent \indent Let $V \cong \Hc_1 \oplus \Hc_1$ be as in the previous section. The $\text{SO}(3)$-irreducible decomposition of the 1-forms on $V$ is given by
$$\Lambda^1(V^*) = \mathsf{T} \oplus \mathsf{N},$$
where
\begin{align*}
\mathsf{T} = \left\langle e^1, e^2, e^3 \right\rangle\!, \\
\mathsf{N} = \left\langle e^4, e^5, e^6 \right\rangle\!.
\end{align*}
As abstract $\text{SO}(3)$-modules, we have isomorphisms
\begin{align*}
\mathcal{H}_1 & \cong \mathsf{T} \cong \mathsf{N}  \cong \Lambda^2(\mathsf{T}) \cong \Lambda^2(\mathsf{N}) & \mathcal{H}_2 & \cong \text{Sym}^2_0(\mathsf{T}) \cong \text{Sym}^2_0(\mathsf{N}).
\end{align*}
\begin{defn}
	We let $\flat \colon V \to V^*$ via $X^\flat := \langle X, \cdot \rangle$ denote the usual (index-lowering) musical isomorphism, and let $\sharp \colon V^* \to V$ denote its inverse.  In the sequel, we let $\mathsf{T}^\sharp, \mathsf{N}^\sharp \subset V$ denote the images of $\mathsf{T}, \mathsf{N} \subset V^*$ under the $\sharp$ isomorphism.
	
	We also let $\S \colon \mathsf{T} \to e^{i\theta} \cdot \mathsf{N}^\sharp$ denote the map
	\begin{equation}
	\alpha \mapsto \alpha^\S = -\sin(\theta) \alpha^\sharp + \cos(\theta) J_0(\alpha^\sharp).
	\label{eq:RotIsom}
	\end{equation}
	Thus, for example, $(e^1)^\S = w_1(\theta)$, etc.
\end{defn} 

\subsubsection{Decomposition of the Quadratic Forms on $V^*$}

\indent \indent We seek to decompose $\text{Sym}^2(V^*)$ into $\text{SO}(3)$-irreducible submodules.  One way to do this is to use $V^* = \mathsf{T} \oplus \mathsf{N}$ to split
\begin{align*}
\text{Sym}^2(V^*) & \cong (\mathbb{R}\text{Id}_{\mathsf{T}} \oplus \text{Sym}^2_0(\mathsf{T})) \oplus (\mathsf{T} \otimes \mathsf{N}) \oplus ( \mathbb{R}\text{Id}_{\mathsf{N}} \oplus \text{Sym}^2_0(\mathsf{N}) ) \\
& \cong (\mathbb{R}\text{Id}_{\mathsf{T}} \oplus \text{Sym}^2_0(\mathsf{T})) \oplus ( \mathcal{H}_0 \oplus \mathcal{H}_1 \oplus \mathcal{H}_2 ) \oplus ( \mathbb{R}\text{Id}_{\mathsf{N}} \oplus \text{Sym}^2_0(\mathsf{N}) )
\end{align*}
Alternatively, recall that $\text{Sym}^2(V^*)$ splits into $\text{SU}(3)$-irreducible submodules as
\begin{equation*}
\text{Sym}^2(V^*) \cong \mathbb{R}\,\text{Id} \oplus \text{Sym}^2_+ \oplus \text{Sym}^2_-
\end{equation*}
Explicitly,
\begin{align*}
\text{Sym}^2_+ & = \left\{ \begin{bmatrix} h_2 & h_1 \\ -h_1 & h_2 \end{bmatrix} \colon h_1 \in \text{Skew}(\mathbb{R}^3), \ h_2 \in \text{Sym}^2_0(\mathbb{R}^3) \right\} \\
\text{Sym}^2_- & = \left\{ \begin{bmatrix} h' + c'\,\text{Id}_3 & h'' + c''\,\text{Id}_3 \\ h'' + c''\,\text{Id}_3 & -h' - c'\,\text{Id}_3 \end{bmatrix} \colon c', c'' \in \mathbb{R}, \ h', h'' \in \text{Sym}^2_0(\mathbb{R}^3) \right\}
\end{align*}
where $\text{Skew}(\mathbb{R}^3) \cong \Lambda^2(\mathbb{R}^3)$ denotes the vector space of skew-symmetric $3 \times 3$ matrices.  This description makes it plain that
\begin{equation}
\text{Sym}^2_+ = (\text{Sym}^2_+)_1 \oplus (\text{Sym}^2_+)_2 \label{eq:Sym2pldecomp}
\end{equation}
where we are defining
\begin{align*}
(\text{Sym}^2_+)_1 & := \left\{ \begin{bmatrix} 0 & h_1 \\ -h_1 & 0 \end{bmatrix} \colon h_1 \in \text{Skew}(\mathbb{R}^3) \right\} = \text{Sym}^2_+ \cap (\mathsf{T} \otimes \mathsf{N}) \cong \mathcal{H}_1 \\
(\text{Sym}^2_+)_2 &  := \left\{ \begin{bmatrix} h_2 & 0 \\ 0 & h_2 \end{bmatrix} \colon h_2 \in \text{Sym}^2_0(\mathbb{R}^3) \right\} = \text{Sym}^2_+ \cap (\text{Sym}^2_0(\mathsf{T}) \oplus \text{Sym}^2_0(\mathsf{T})) \cong \mathcal{H}_2
\end{align*}
Similarly, we see that 
$$\text{Sym}^2_- = (\text{Sym}^2_-)_0' \oplus (\text{Sym}^2_-)_2' \oplus (\text{Sym}^2_-)_0'' \oplus (\text{Sym}^2_-)_2''$$
where we are defining
\begin{align*}
(\text{Sym}^2_-)_0' & = \left\{ \begin{bmatrix} c'\,\text{Id}_3 & 0 \\ 0 & -c'\,\text{Id}_3 \end{bmatrix} \colon c' \in \mathbb{R} \right\} & (\text{Sym}^2_-)_0'' & = \left\{ \begin{bmatrix} 0 & c''\,\text{Id}_3 \\ c''\,\text{Id}_3 & 0 \end{bmatrix} \colon c'' \in \mathbb{R} \right\} \\
(\text{Sym}^2_-)_2' & = \left\{ \begin{bmatrix} h' & 0 \\ 0 & -h' \end{bmatrix} \colon h' \in \text{Sym}^2_0(\mathbb{R}^3) \right\} & (\text{Sym}^2_-)_2'' & = \left\{ \begin{bmatrix} 0 & h'' \\ h''  & 0 \end{bmatrix} \colon h', h'' \in \text{Sym}^2_0(\mathbb{R}^3) \right\}
\end{align*}

\subsubsection{Decomposition of the $2$-forms on $V$}

\indent \indent We now seek to decompose $\Lambda^2(V^*)$ into $\text{SO}(3)$-irreducible submodules.  As noted above, $\Lambda^2(V^*)$ splits into $\text{SU}(3)$-irreducible submodules as
\begin{align}
\Lambda^2(V^*) & \cong \mathbb{R}\Omega_0 \oplus \Lambda^2_6 \oplus \Lambda^2_8 \label{eq:SU3-2forms-1}
\end{align}
On the other hand, using $V^* = \mathsf{T} \oplus \mathsf{N}$, we may also decompose $\Lambda^2(V^*)$ as
\begin{align}
\Lambda^2(V^*) & \cong \Lambda^2(\mathsf{T}) \oplus (\mathsf{T} \otimes \mathsf{N}) \oplus \Lambda^2(\mathsf{N}). \label{eq:SU3-2forms-2}
\end{align}
We will refine both decompositions (\ref{eq:SU3-2forms-1}) and (\ref{eq:SU3-2forms-2}) into $\text{SO}(3)$-submodules.

To begin, note first that as $\text{SO}(3)$-modules, we have that $\mathbb{R}\Omega_0 \cong \mathcal{H}_0$ and $\Lambda^2(\mathsf{T}) \cong \mathcal{H}_1$ and $\Lambda^2(\mathsf{N}) \cong \mathcal{H}_1$ are irreducible.  Thus, it remains only to decompose $\Lambda^2_6$, $\Lambda^2_8$, and $\mathsf{T} \otimes \mathsf{N}$.
\begin{defn}
	Recall the isomorphism $\rho \colon \text{Sym}^2_+ \to \Lambda^2_8$ defined in (\ref{eq:rho-chi}).  We define
	\begin{align*}
	(\Lambda^2_6)_{\mathsf{T}} & = \{\iota_X(\text{Re}(\Upsilon_0)) \colon X \in \mathsf{T}^\sharp\} & (\Lambda^2_8)_1 & = \rho((\text{Sym}^2_+)_1) \\
	(\Lambda^2_6)_{\mathsf{N}} & = \{\iota_X(\text{Re}(\Upsilon_0)) \colon X \in \mathsf{N}^\sharp\} & (\Lambda^2_8)_2 & = \rho((\text{Sym}^2_+)_2)
	\end{align*}
	and
	$$(\mathsf{T} \otimes \mathsf{N})_1 = \{\alpha_1 \wedge \alpha_2 + J_0\alpha_2 \wedge J_0\alpha_1 \colon \alpha_1 \in \mathsf{T}, \, \alpha_2 \in \mathsf{N}\}.$$
\end{defn}
\begin{lem}\label{lem:sLagLam2}
	There exist decompositions
	\begin{align}
	\Lambda^2_6 & = (\Lambda^2_6)_{\mathsf{T}} \oplus (\Lambda^2_6)_{\mathsf{N}} \label{eq:Lam26decomp} \\
	\Lambda^2_8 & = (\Lambda^2_8)_1 \oplus (\Lambda^2_8)_2 \label{eq:Lam28decomp} \\
	\mathsf{T} \otimes \mathsf{N} & = \mathbb{R}\Omega_0 \oplus (\mathsf{T} \otimes \mathsf{N})_1 \oplus (\Lambda^2_8)_2 \label{eq:TNdecomp}
	\end{align}
	and these consist of $\text{SO}(3)$-irreducible submodules. \\
	\indent Thus, the decomposition
	\begin{align*}
	\Lambda^2(V^*) & = \mathbb{R}\Omega_0 \oplus \left[ (\Lambda^2_6)_{\mathsf{T}} \oplus (\Lambda^2_6)_{\mathsf{N}} \right] \oplus \left[ (\Lambda^2_8)_1 \oplus (\Lambda^2_8)_2 \right]
	\end{align*}
	is $\text{SO}(3)$-irreducible and refines (\ref{eq:SU3-2forms-1}), while
	\begin{align*}
	\Lambda^2(V^*) & = \Lambda^2(\mathsf{T}) \oplus \left[ \mathbb{R}\Omega_0 \oplus (\mathsf{T} \otimes \mathsf{N})_1 \oplus (\Lambda^2_8)_2 \right] \oplus \Lambda^2(\mathsf{N})
	\end{align*}
	is $\text{SO}(3)$-irreducible and refines (\ref{eq:SU3-2forms-2}).
\end{lem}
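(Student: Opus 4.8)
The plan is to reduce the entire statement to $\text{SO}(3)$-representation theory, using the Clebsch--Gordan formula (\ref{eq:ClebschGordon}) to compute the isotypic decomposition of $\Lambda^2(V^*)$ and then matching each explicitly-defined submodule to an isotypic piece. Since $\mathsf{T} \cong \mathsf{N} \cong \mathcal{H}_1$, I have $\Lambda^2(\mathsf{T}) \cong \Lambda^2(\mathsf{N}) \cong \Lambda^2(\mathcal{H}_1) \cong \mathcal{H}_1$ and, by (\ref{eq:ClebschGordon}), $\mathsf{T} \otimes \mathsf{N} \cong \mathcal{H}_1 \otimes \mathcal{H}_1 \cong \mathcal{H}_0 \oplus \mathcal{H}_1 \oplus \mathcal{H}_2$, so that $\Lambda^2(V^*) \cong \mathcal{H}_0 \oplus 3\mathcal{H}_1 \oplus \mathcal{H}_2$. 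The crucial bookkeeping is that $\mathcal{H}_0$ and $\mathcal{H}_2$ each occur with multiplicity one, while $\mathcal{H}_1$ occurs with multiplicity three. By Schur's lemma this means the $\mathcal{H}_0$- and $\mathcal{H}_2$-isotypic components are each a single canonically-determined irreducible submodule, whereas the three copies of $\mathcal{H}_1$ are \emph{not} individually canonical and must be pinned down by hand.

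For (\ref{eq:Lam26decomp}): the map $X \mapsto \iota_X(\text{Re}(\Upsilon_0))$ is $\text{SO}(3)$-equivariant (as $\text{Re}(\Upsilon_0)$ is $\text{SO}(3)$-invariant) and restricts to an isomorphism $V \xrightarrow{\sim} \Lambda^2_6$. Restricting further to the irreducible summands $\mathsf{T}^\sharp, \mathsf{N}^\sharp \subset V$ yields $(\Lambda^2_6)_{\mathsf{T}}, (\Lambda^2_6)_{\mathsf{N}}$, each irreducible and isomorphic to $\mathcal{H}_1$; injectivity together with $\mathsf{T}^\sharp \cap \mathsf{N}^\sharp = 0$ gives trivial intersection, and the count $3 + 3 = 6$ closes (\ref{eq:Lam26decomp}). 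For (\ref{eq:Lam28decomp}): the isomorphism $\rho$ of (\ref{eq:rho-chi}) is $\text{SU}(3)$-equivariant, hence $\text{SO}(3)$-equivariant; applying it to the decomposition (\ref{eq:Sym2pldecomp}) of $\text{Sym}^2_+$, whose summands satisfy $(\text{Sym}^2_+)_1 \cong \mathcal{H}_1$ and $(\text{Sym}^2_+)_2 \cong \mathcal{H}_2$, produces (\ref{eq:Lam28decomp}) with $(\Lambda^2_8)_1 \cong \mathcal{H}_1$ and $(\Lambda^2_8)_2 \cong \mathcal{H}_2$.

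The substantive step is (\ref{eq:TNdecomp}), the decomposition of the $9$-dimensional module $\mathsf{T} \otimes \mathsf{N} \cong \mathcal{H}_0 \oplus \mathcal{H}_1 \oplus \mathcal{H}_2$, and I would identify its three isotypic pieces in turn. The $\mathcal{H}_0$ piece is $\mathbb{R}\Omega_0$, since $\Omega_0 = e^{14} + e^{25} + e^{36}$ lies in $\mathsf{T} \wedge \mathsf{N}$ and is $\text{SO}(3)$-invariant. For the $\mathcal{H}_2$ piece, the key observation is that $\mathcal{H}_2$ has multiplicity one in all of $\Lambda^2(V^*)$, so its global $\mathcal{H}_2$-isotypic component is a single irreducible submodule, which is exactly $(\Lambda^2_8)_2 = \rho((\text{Sym}^2_+)_2)$; since $\mathsf{T} \otimes \mathsf{N}$ is a submodule containing a copy of $\mathcal{H}_2$, it must contain this entire isotypic component, giving the a priori nonobvious inclusion $(\Lambda^2_8)_2 \subseteq \mathsf{T} \otimes \mathsf{N}$. (This can also be checked directly: evaluating $\rho$ on block matrices $\text{diag}(h_2, h_2)$ with $h_2 \in \text{Sym}^2_0(\mathbb{R}^3)$ produces $2$-forms supported entirely on $\mathsf{T} \wedge \mathsf{N}$.) Finally one verifies that $(\mathsf{T} \otimes \mathsf{N})_1$ is a $3$-dimensional $\text{SO}(3)$-submodule isomorphic to $\mathcal{H}_1$. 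Because $\mathcal{H}_0, \mathcal{H}_1, \mathcal{H}_2$ are pairwise non-isomorphic, Schur's lemma forces $\mathbb{R}\Omega_0$, $(\mathsf{T} \otimes \mathsf{N})_1$, $(\Lambda^2_8)_2$ to intersect pairwise trivially, and the count $1 + 3 + 5 = 9$ establishes (\ref{eq:TNdecomp}).

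The two refined decompositions of $\Lambda^2(V^*)$ then follow by substituting (\ref{eq:Lam26decomp})--(\ref{eq:Lam28decomp}) into (\ref{eq:SU3-2forms-1}) and (\ref{eq:TNdecomp}) into (\ref{eq:SU3-2forms-2}); directness is inherited from the already-direct $\text{SU}(3)$- and $\mathsf{T} \oplus \mathsf{N}$-decompositions together with the directness established inside each summand, and irreducibility of each piece was verified along the way. I expect the main obstacle to be exactly the genuine multiplicity three of $\mathcal{H}_1$, which blocks any purely abstract identification of the three $\mathcal{H}_1$-summands: the decomposition only becomes canonical once it is cut by the two \emph{different} ambient splittings (\ref{eq:SU3-2forms-1}) and (\ref{eq:SU3-2forms-2}). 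Correspondingly, the one place where the two pictures genuinely interact is the inclusion $(\Lambda^2_8)_2 \subseteq \mathsf{T} \otimes \mathsf{N}$, which is the only nonformal containment in the argument and the step I would write out most carefully.
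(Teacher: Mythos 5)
Your proposal is correct and follows essentially the same route as the paper: the equivariant isomorphisms $X \mapsto \iota_X(\mathrm{Re}(\Upsilon_0))$ and $\rho$ handle (\ref{eq:Lam26decomp}) and (\ref{eq:Lam28decomp}), and the Clebsch--Gordan decomposition $\mathsf{T} \otimes \mathsf{N} \cong \mathcal{H}_0 \oplus \mathcal{H}_1 \oplus \mathcal{H}_2$ combined with the multiplicity-one occurrences of $\mathcal{H}_0$ and $\mathcal{H}_2$ in $\Lambda^2(V^*)$ pins down (\ref{eq:TNdecomp}), exactly as in the paper's proof. Your extra attention to the inclusion $(\Lambda^2_8)_2 \subseteq \mathsf{T} \otimes \mathsf{N}$ and the explicit Schur/dimension-count bookkeeping only makes explicit what the paper leaves implicit.
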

\begin{proof}
	The decomposition (\ref{eq:Lam26decomp}) follows from the isomorphism $V \to \Lambda^2_6, \: X \mapsto \iota_X\! \left( \text{Re}\! \left( \Upsilon_0 \right) \right)$ and the irreducible decomposition $V \cong \mathsf{T} \oplus \mathsf{N}$. \\
	\indent Decomposition (\ref{eq:Lam28decomp}) follows from applying the isomorphism $\rho \colon \text{Sym}^2_+ \to \Lambda^2_8$ to the irreducible decomposition (\ref{eq:Sym2pldecomp}) of $\text{Sym}^2_+$. \\
	\indent For decomposition (\ref{eq:TNdecomp}), note that as an $\text{SO}(3)$-module
	\begin{align}
	\mathsf{T} \otimes \mathsf{N} \cong \Hc_1 \otimes \Hc_1 \cong \Hc_0 \oplus \Hc_1 \oplus \Hc_2. \label{eq:TNSO3decomp}
	\end{align}
	The only trivial $\text{SO}(3)$-module contained in $\Lambda^2 \left( V^* \right)$ is $\R \Omega_0,$ so this must correspond to the trivial component of (\ref{eq:TNSO3decomp}). Similarly, the only $\text{SO}(3)$-module isomorphic to $\Hc_2$ contained in $\Lambda^2 \left( V^* \right)$ is $(\Lambda^2_8)_2,$ so this must correspond to the $\Hc_2$ component of (\ref{eq:TNSO3decomp}). The inclusion $\left( \mathsf{T} \otimes \mathsf{N} \right)_1 \subset \mathsf{T} \otimes \mathsf{N}$ is clear by construction, and since $\left( \mathsf{T} \otimes \mathsf{N} \right)_1 \cong \Hc_1$ we have demonstrated that decomposition (\ref{eq:TNdecomp}) holds.
\end{proof}

\begin{defn}
	Recall the isomorphism $\rho \colon \text{Sym}^2_+ \to \Lambda^2_8$ defined in (\ref{eq:rho-chi}) and the set $\{w_1(\theta), w_2(\theta), w_3(\theta)\}$ defined in (\ref{eq:vw-vectors}).  Consider the isomorphisms of $\text{SO}(3)$-modules given by
	\begin{align*}
	e^{i\theta} \cdot \mathsf{N}^\sharp & \to \text{Skew}(\mathbb{R}^3)  \to (\Lambda^2_8)_1 \\
	a_p w_p(\theta) & \mapsto h = \begin{bmatrix} 0 & a_3 & -a_2 \\ -a_3 & 0 & a_1 \\ a_2 & -a_1 & 0 \end{bmatrix} \mapsto \frac{1}{\sqrt{2}}\, \rho\! \left( \begin{bmatrix} 0 & h \\ h & 0 \end{bmatrix} \right)
	\end{align*}
	We will let
	\begin{equation}
	\natural \colon (\Lambda^2_8)_1 \to e^{i\theta} \cdot \mathsf{N}^\sharp
	\label{eq:SU3-NatIsom}
	\end{equation}
	denote the inverse of this isomorphism.  This map is, in fact, an isometry with respect to the given inner products on $e^{i\theta}\cdot \mathsf{N}^\sharp$ and $(\Lambda^2_8)_1$, due to the factor of $\frac{1}{\sqrt{2}}$.
\end{defn}

\subsubsection{Decomposition of the $3$-forms on $V$}

\indent \indent We now seek to decompose $\Lambda^3(V^*)$ into $\text{SO}(3)$-irreducible submodules.  As noted above, $\Lambda^3(V^*)$ splits into $\text{SU}(3)$-irreducible submodules as
\begin{align}
\Lambda^3(V^*) & \cong \mathbb{R}\,\text{Re}(\Upsilon_0) \oplus \mathbb{R}\,\text{Im}(\Upsilon_0) \oplus \Lambda^3_6 \oplus \Lambda^3_{12} \label{eq:SU3-3forms-1}
\end{align}
On the other hand, using $V^* = \mathsf{T} \oplus \mathsf{N}$, we may also decompose $\Lambda^2(V^*)$ as
\begin{align}
\Lambda^3(V^*) & \cong \Lambda^3(\mathsf{T}) \oplus (\Lambda^2(\mathsf{T}) \otimes \mathsf{N}) \oplus (\mathsf{T} \otimes \Lambda^2(\mathsf{N})) \oplus \Lambda^3(\mathsf{N}). \label{eq:SU3-3forms-2}
\end{align}
We will refine (\ref{eq:SU3-3forms-1}) into $\text{SO}(3)$-submodules.  To begin, note first that $\mathbb{R}\text{Re}(\Upsilon_0) \cong \mathbb{R}\text{Im}(\Upsilon_0) \cong \mathcal{H}_0$ are irreducible as $\text{SO}(3)$-modules, while $\Lambda^3_6$ and $\Lambda^3_{12}$ are not.

\begin{defn}
	Recall the isomorphism $\chi \colon \text{Sym}^2_- \to \Lambda^3_{12}$ of (\ref{eq:rho-chi}). We define
	\begin{align*}
	(\Lambda^3_6)_{\mathsf{T}} & = \{\alpha \wedge \Omega_0 \colon \alpha \in \mathsf{T}\} & (\Lambda^3_{12})_{0}' & = \chi( (\text{Sym}^2_-)_0' ) & (\Lambda^3_{12})_{0}'' & = \chi( (\text{Sym}^2_-)_0'' ) \\
	(\Lambda^3_6)_{\mathsf{N}} & = \{\alpha \wedge \Omega_0 \colon \alpha \in \mathsf{N}\} & (\Lambda^3_{12})_{2}' & = \chi( (\text{Sym}^2_-)_2' ) & (\Lambda^3_{12})_{2}'' & = \chi( (\text{Sym}^2_-)_2'' )
	\end{align*}
\end{defn}

\begin{lem}\label{lem:sLagLam3}
	The decompositions
	\begin{align*}
	\Lambda^3_6 & = (\Lambda^3_6)_{\mathsf{T}} \oplus (\Lambda^3_6)_{\mathsf{N}} \\
	\Lambda^3_{12} & = (\Lambda^3_{12})_{0}' \oplus (\Lambda^3_{12})_{2}' \oplus (\Lambda^3_{12})_{0}'' \oplus (\Lambda^3_{12})_{2}''
	\end{align*}
	consist of $\text{SO}(3)$-irreducible submodules.
\end{lem}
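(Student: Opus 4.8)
The plan is to mimic the proof of Lemma \ref{lem:sLagLam2} essentially verbatim: both decompositions will be obtained by pushing a known $\text{SO}(3)$-irreducible splitting of an auxiliary module through an $\text{SU}(3)$-equivariant isomorphism. The guiding observation is that, since $\text{SO}(3) \leq \text{SU}(3)$, every $\text{SU}(3)$-module isomorphism is automatically $\text{SO}(3)$-equivariant, and hence carries $\text{SO}(3)$-irreducible submodules to $\text{SO}(3)$-irreducible submodules of the same isomorphism type.

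For the splitting of $\Lambda^3_6$, I would use the $\text{SU}(3)$-module isomorphism $\Lambda^1 = V^* \to \Lambda^3_6$ given by $\alpha \mapsto \alpha \wedge \Omega_0$, which is the map implicit in the definition $\Lambda^3_6 = \{\alpha \wedge \Omega_0 \colon \alpha \in \Lambda^1\}$ (injectivity being hard Lefschetz on $1$-forms in dimension $6$). Because $\Omega_0$ is $\text{SU}(3)$-invariant, this map is $\text{SO}(3)$-equivariant, so it transports the $\text{SO}(3)$-irreducible splitting $V^* = \mathsf{T} \oplus \mathsf{N}$ (with $\mathsf{T} \cong \mathsf{N} \cong \Hc_1$) onto $\Lambda^3_6 = (\Lambda^3_6)_{\mathsf{T}} \oplus (\Lambda^3_6)_{\mathsf{N}}$. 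Each summand is therefore irreducible and isomorphic to $\Hc_1$, which settles the first claim.

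For the splitting of $\Lambda^3_{12}$, I would invoke the $\text{SU}(3)$-module isomorphism $\chi \colon \text{Sym}^2_- \to \Lambda^3_{12}$ from (\ref{eq:rho-chi}), which is again $\text{SO}(3)$-equivariant. It then suffices to check that the four-term splitting of $\text{Sym}^2_-$ recorded earlier is into $\text{SO}(3)$-irreducibles, and this can be read off from the explicit block forms together with the fact that $\text{SO}(3)$ acts on $V = \R^3 \oplus \R^3$ block-diagonally as $\text{diag}(R,R)$ (the map $e_i \mapsto e_{i+3}$ being equivariant). The scalar blocks $c'\,\text{Id}_3$ and $c''\,\text{Id}_3$ are fixed, so $(\text{Sym}^2_-)_0'$ and $(\text{Sym}^2_-)_0''$ are copies of $\Hc_0$, while the traceless symmetric blocks lie in $\text{Sym}^2_0(\R^3) \cong \Hc_2$, so $(\text{Sym}^2_-)_2'$ and $(\text{Sym}^2_-)_2''$ are copies of $\Hc_2$. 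Applying $\chi$ yields the asserted decomposition into irreducibles, with dimensions $1 + 5 + 1 + 5 = 12$ accounting for all of $\Lambda^3_{12}$.

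There is no genuinely hard step; the only point demanding care is confirming the $\text{SO}(3)$-irreducibility of the four $\text{Sym}^2_-$ summands, which reduces to tracking how the block-diagonal action rotates the diagonal and off-diagonal blocks. Since $\Hc_0$ and $\Hc_2$ each occur with multiplicity two in $\text{Sym}^2_-$, I would emphasize that the lemma asserts only that the named submodules are irreducible and sum directly to the whole, not that the isotypic pieces decompose uniquely, so no further argument distinguishing the primed from the double-primed summands is needed.
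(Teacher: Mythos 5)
Your proof is correct and is essentially the paper's own approach: the paper states Lemma \ref{lem:sLagLam3} without proof precisely because it follows the proof of Lemma \ref{lem:sLagLam2} mutatis mutandis, namely transporting the irreducible splitting $V^* = \mathsf{T} \oplus \mathsf{N}$ through the equivariant isomorphism $\alpha \mapsto \alpha \wedge \Omega_0$ onto $\Lambda^3_6$, and the four-term splitting of $\text{Sym}^2_-$ through $\chi$ onto $\Lambda^3_{12}$, exactly as you do.
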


\begin{defn}
	We define maps $\dagger \colon (\Lambda^3_{12})_0' \to \mathbb{R}$ and $\
	\ddag \colon (\Lambda^3_{12})_0'' \to \mathbb{R}$ to be the unique vector space isomorphisms for which
	\begin{align}
	(-\text{Re}(\Upsilon_0) + 4e^{123})^\dagger & = 2\sqrt{3} \label{eq:SU3-DagIsom} \\
	(-\text{Im}(\Upsilon_0) - 4e^{456})^\ddag & = 2\sqrt{3} \label{eq:SU3-DDagIsom}
	\end{align}
	These maps are isometries (due to the choice of $2\sqrt{3}$) with respect to our inner product (\ref{eq:SU3-LambdaMetric}).
\end{defn}
\begin{rmk}
	To refine (\ref{eq:SU3-3forms-2}) into $\text{SO}(3)$-irreducible submodules, one simply has to decompose $\Lambda^2(\mathsf{T}) \otimes \mathsf{N}$ and $\Lambda^2(\mathsf{N}) \otimes \mathsf{T}$ into irreducibles.  This can be done by, say, tracing through the isomorphisms
	$$\Lambda^2(\mathsf{T}) \otimes \mathsf{N} \cong \mathsf{T} \otimes \mathsf{N} \cong \mathsf{T} \otimes \mathsf{T} \cong \mathbb{R} \oplus \text{Sym}^2_0(\mathsf{T}) \oplus \Lambda^2(\mathsf{T})$$
	and similarly for $\Lambda^2(\mathsf{N}) \otimes \mathsf{T}$.  Since we will not need such a refinement for this work, we leave the details to the interested reader.
\end{rmk}


\subsection{The Refined Torsion Forms} \label{ssect:sLagreftors}

\indent \indent Let $(M^6, \Omega, \Upsilon)$ be a $6$-manifold equipped with an $\text{SU}(3)$-structure $(\Omega, \Upsilon)$.  Fix a point $x \in M$, choose an arbitrary phase 0 special Lagrangian $3$-plane $\mathsf{T}^\sharp \subset T_xM$, and let $\mathsf{N}^\sharp \subset T_xM$ denote its orthogonal $3$-plane.  Our purpose in this section is to understand how the torsion of the $\text{SU}(3)$-structure decomposes with respect to the splitting
$$T_xM = \mathsf{T}^\sharp \oplus \mathsf{N}^\sharp.$$
\indent In \S\ref{sssect:sLagreftorsLocFrame}, we use Lemmas \ref{lem:sLagLam2} and \ref{lem:sLagLam3} to break the torsion forms $\tau_0, \widehat{\tau}_0, \ldots, \tau_5$ into $\text{SO}(3)$-irreducible pieces called \textit{refined torsion forms}.  Separately, in \S\ref{sssect:sLagTorsFuns}, we set up the $\text{SU}(3)$-coframe bundle $\pi \colon F_{\text{SU}(3)} \to M$ following \cite{MR2287296}, repackaging the original $\text{SU}(3)$ torsion forms $\tau_0, \widehat{\tau}_0, \ldots, \tau_5$ as a pair of functions
\begin{align*}
T = (T_{ij}) \colon F_{\text{SU}(3)} & \to \text{Mat}_{6 \times 6}(\mathbb{R}) \simeq \mathbb{R}^{36} \\
U = (U_i) \colon F_{\text{SU}(3)} & \to \mathbb{R}^6
\end{align*}
Finally, in \S\ref{sssect:sLagTorsDecomp}, we express the functions $T_{ij}$ and $U_i$ in terms of the (pullbacks of the) refined torsion forms.

\subsubsection{The Refined Torsion Forms in a Local $\text{SO}(3)$-Frame}\label{sssect:sLagreftorsLocFrame}

\indent \indent Fix $x \in M$ and split $T_x^*M = \mathsf{T} \oplus \mathsf{N}$ as above.  All of our calculations in this subsection will be done pointwise, and we will frequently suppress reference to $x \in M$.  By Lemmas 3.5 and 3.6, the torsion forms decompose into $\text{SO}(3)$-irreducible pieces as follows:
\begin{subequations} \label{eq:SU3-TorsionSplit-1}
\begin{align}
\tau_0 & = \tau_0 & \tau_2 & = (\tau_2)_1 + (\tau_2)_2 & \tau_3 & = (\tau_3)_0' + (\tau_3)_0'' + (\tau_3)_2' + (\tau_3)_2'' & \tau_4 & = (\tau_4)_{\mathsf{T}} + (\tau_4)_{\mathsf{N}} \\
\widehat{\tau}_0 & = \widehat{\tau}_0 & \widehat{\tau}_2 & = (\widehat{\tau}_2)_1 + (\widehat{\tau}_2)_2 & & & \tau_5 & = (\tau_5)_{\mathsf{T}} + (\tau_5)_{\mathsf{N}}
\end{align}
\end{subequations}
where here
\begin{subequations} 
\begin{align*}
(\tau_2)_1, (\widehat{\tau}_2)_1 & \in (\Lambda^2_8)_1 & (\tau_3)_0' & \in (\Lambda^3_{12})_0' & (\tau_3)_2' & \in (\Lambda^3_{12})_2' & (\tau_4)_{\mathsf{T}}, (\tau_5)_{\mathsf{T}} & \in \mathsf{T}  \\
(\tau_2)_2, (\widehat{\tau}_2)_2 & \in (\Lambda^2_8)_2 & (\tau_3)_0'' & \in (\Lambda^3_{12})_0'' & (\tau_3)_2'' & \in (\Lambda^3_{12})_2'' & (\tau_4)_{\mathsf{N}}, (\tau_5)_{\mathsf{N}} & \in \mathsf{N}
\end{align*}
\end{subequations}
We refer to $\tau_0, \widehat{\tau}_0, (\tau_2)_1, \ldots, (\tau_5)_{\mathsf{N}}$ as the \textit{refined torsion forms} of the $\text{SU}(3)$-structure at $x$ \textit{relative to the splitting $T_x^*M = \mathsf{T} \oplus \mathsf{N}$}. \\


\indent We seek to express the refined torsion in terms of a local $\text{SO}(3)$-frame. To that end, let $\{e_1, \ldots, e_6\}$ be an orthonormal basis for $T_xM$ for which $\mathsf{T}^\sharp = \text{span}(e_1, e_2, e_3)$ and $\mathsf{N}^\sharp = \text{span}(e_4, e_5, e_6)$.  Let $\{e^1, \ldots, e^6\}$ denote the dual basis for $T_x^*M$. \\

\noindent \textbf{Index Ranges:} We will employ the following index ranges: $1 \leq p,q \leq 3$ and $4 \leq \alpha, \beta \leq 6$ and $1 \leq i,j,k,\ell,m \leq 6$ and $1 \leq \delta \leq 5$.

\begin{defn}
	Define the $2$-forms
	\begin{align*}
	\Gamma_1 & = -e^{23} - e^{56} & \Upsilon_1 & = e^{26} + e^{35} & \Upsilon_4 & = e^{14} - e^{25} \\
	\Gamma_2 & = -e^{31} - e^{64} & \Upsilon_2 & = e^{16} + e^{34} & \Upsilon_5 & = e^{25} - e^{36} \\
	\Gamma_3 & = -e^{12} - e^{45} & \Upsilon_3 & = e^{15} + e^{24}
	\end{align*}
	These $2$-forms were obtained by applying $\rho \colon (\text{Sym}^2_+)_1 \oplus (\text{Sym}^2_+)_2 \to (\Lambda^2_8)_1 \oplus (\Lambda^2_8)_2$ of (\ref{eq:rho-chi}) to a suitable basis of $\text{Sym}^2_+$.
\end{defn}

\begin{lem}\label{lem:sLagLam2Basis}
	We have that:
	\begin{enumerate}[label=(\alph*),nosep]
		\item $\{\Gamma_1, \Gamma_2, \Gamma_3\}$ is a basis of $(\Lambda^2_8)_1.$
		\item $\{\Upsilon_1, \Upsilon_2, \Upsilon_3, \Upsilon_4, \Upsilon_5\}$ is a basis of $(\Lambda^2_8)_2.$
	\end{enumerate}
\end{lem}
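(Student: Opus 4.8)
The plan is to leverage that $\rho \colon \text{Sym}^2_+ \to \Lambda^2_8$ of (\ref{eq:rho-chi}) is a linear isomorphism which, by the very definition of $(\Lambda^2_8)_1 = \rho((\text{Sym}^2_+)_1)$ and $(\Lambda^2_8)_2 = \rho((\text{Sym}^2_+)_2)$, restricts to isomorphisms $(\text{Sym}^2_+)_1 \xrightarrow{\sim} (\Lambda^2_8)_1$ and $(\text{Sym}^2_+)_2 \xrightarrow{\sim} (\Lambda^2_8)_2$. Since an isomorphism sends a basis to a basis, it is enough to exhibit bases of the two summands of $\text{Sym}^2_+$ in (\ref{eq:Sym2pldecomp}) whose $\rho$-images are nonzero scalar multiples of the listed $2$-forms. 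Throughout I will use that for a symmetric matrix $H = (h_{ij})$ one has $\rho(H) = \sum_{i,j} (H\Omega)_{ij}\,e^{ij}$, where $\Omega$ is the block matrix with $I_3$ in the upper-right and $-I_3$ in the lower-left block, together with the reduction $\sum_{i,j} M_{ij} e^{ij} = \sum_{i<j}(M_{ij} - M_{ji})\,e^{ij}$.

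For part (a), the space $(\text{Sym}^2_+)_1$ is $3$-dimensional (it is $\cong \mathcal{H}_1$) and consists of the block matrices with a skew $h_1$ in the off-diagonal blocks. Feeding the standard basis $E_{32}-E_{23},\ E_{13}-E_{31},\ E_{21}-E_{12}$ of $\text{Skew}(\mathbb{R}^3)$ into this description and computing $H\Omega$, which works out to the block-diagonal matrix with $-h_1$ in each diagonal block, one finds that the three $\rho$-images are precisely $-2\Gamma_1,\,-2\Gamma_2,\,-2\Gamma_3$. As nonzero multiples of the image of a basis, the set $\{\Gamma_1,\Gamma_2,\Gamma_3\}$ is a basis of $(\Lambda^2_8)_1$.

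For part (b), the space $(\text{Sym}^2_+)_2$ is $5$-dimensional (it is $\cong \mathcal{H}_2$) and consists of block matrices with the same trace-free symmetric $h_2$ in both diagonal blocks. Here $H\Omega$ has $h_2$ in the upper-right and $-h_2$ in the lower-left block, and feeding in the standard basis $E_{23}+E_{32},\,E_{13}+E_{31},\,E_{12}+E_{21},\,E_{11}-E_{22},\,E_{22}-E_{33}$ of $\text{Sym}^2_0(\mathbb{R}^3)$ yields the $\rho$-images $2\Upsilon_1,\dots,2\Upsilon_5$. Hence $\{\Upsilon_1,\dots,\Upsilon_5\}$ is a basis of $(\Lambda^2_8)_2$.

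The only genuine work is the sign-sensitive bookkeeping in the products $H\Omega$ and in passing from the full sum $\sum_{i,j}M_{ij}e^{ij}$ to the reduced sum over $i<j$; this is routine but is exactly where one must take care. A self-contained alternative, avoiding $\rho$, would be to verify directly that each $\Gamma_i$ and $\Upsilon_j$ satisfies the defining conditions $\beta \wedge \text{Re}(\Upsilon_0) = 0$ and $\star\beta = -\beta \wedge \Omega_0$ of $\Lambda^2_8$, observe that linear independence within each family is immediate since the $\Gamma_i$ involve pairwise disjoint monomials and each of $\Upsilon_1,\Upsilon_2,\Upsilon_3$ contains a monomial appearing in no other $\Upsilon_j$, and then match dimensions using $\dim(\Lambda^2_8)_1 = \dim\mathcal{H}_1 = 3$ and $\dim(\Lambda^2_8)_2 = \dim\mathcal{H}_2 = 5$. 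I would favor the $\rho$-image argument, as it is shorter and directly reuses the isomorphism and decomposition already in hand.
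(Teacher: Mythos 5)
Your proof is correct and is essentially the paper's own argument: the paper defines the $\Gamma_p$ and $\Upsilon_\delta$ precisely as images under $\rho$ of a basis of $(\text{Sym}^2_+)_1 \oplus (\text{Sym}^2_+)_2$, and since $(\Lambda^2_8)_1 = \rho((\text{Sym}^2_+)_1)$ and $(\Lambda^2_8)_2 = \rho((\text{Sym}^2_+)_2)$ by definition, the lemma follows just as you argue (your computed images $-2\Gamma_p$ and $2\Upsilon_\delta$ are also correct). The paper leaves these routine matrix computations implicit; you have simply written them out.
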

\begin{defn}
	Define the $3$-forms
	\begin{align*}
	\Theta_1 & = -e^{245} - e^{364} & \Theta_4 & = -e^{156} + e^{264} & \Delta_1 & = e^{125} + e^{316} & \Delta_4 & = -e^{315} + e^{234} \\
	\Theta_2 & = -e^{145} - e^{356} & \Theta_5 & = e^{264} + e^{345} & \Delta_2 & = e^{124} + e^{236}  & \Delta_5 & = -e^{126} + e^{315} \\
	\Theta_3 & = -e^{164} - e^{256} & & & \Delta_3 & = e^{314} + e^{235}
	\end{align*}
	and
	\begin{align*}
	\Theta_0 & = -\text{Re}(\Upsilon_0) + 4e^{123} & \Delta_0 & = - \text{Im}(\Upsilon_0) - 4e^{456}
	\end{align*}
	These $3$-forms were obtained by applying the isomorphism
	$$\chi \colon (\text{Sym}^2_-)_0' \oplus (\text{Sym}^2_-)_0'' \oplus (\text{Sym}^2_-)_2' \oplus (\text{Sym}^2_-)_2'' \to (\Lambda^3_{12})_0' \oplus (\Lambda^3_{12})_0'' \oplus (\Lambda^3_{12})_2' \oplus (\Lambda^3_{12})_2''$$
	of (\ref{eq:rho-chi}) to a suitable basis of $\text{Sym}^2_-$.
\end{defn}

\begin{lem}\label{lem:sLagLam3Basis}
	 We have that:
	 \begin{enumerate}[label=(\alph*),nosep]
	 	\item $\{\Theta_0\}$ is a basis of $(\Lambda^3_{12})_0'$
	 	\item $\{\Delta_0\}$ is a basis of $(\Lambda^3_{12})_0''$
	 	\item $\{\Theta_1, \ldots, \Theta_5\}$ is a basis of $(\Lambda^3_{12})_2'$
	 	\item $\{\Delta_1, \ldots, \Delta_5\}$ is a basis of $(\Lambda^3_{12})_2''$
	 \end{enumerate}
\end{lem}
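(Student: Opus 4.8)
The plan is to read the lemma off from the single fact that $\chi$ is a linear isomorphism. By the Definition preceding Lemma~\ref{lem:sLagLam3}, the four target subspaces are exactly the $\chi$-images
\[
(\Lambda^3_{12})_0' = \chi\big((\text{Sym}^2_-)_0'\big), \qquad (\Lambda^3_{12})_2' = \chi\big((\text{Sym}^2_-)_2'\big),
\]
and analogously for the double-primed summands, of the four pieces of $\text{Sym}^2_-$, whose dimensions are $1,1,5,5$ (recall $\dim \text{Sym}^2_0(\mathbb{R}^3) = 5$). Since a linear isomorphism carries a basis of any subspace to a basis of its image, it suffices to fix an explicit basis of each summand and check that its image under $\chi$ is the collection of forms named in the statement.

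For the two one-dimensional summands I take the generators $\begin{bmatrix} \text{Id}_3 & 0 \\ 0 & -\text{Id}_3 \end{bmatrix}$ and $\begin{bmatrix} 0 & \text{Id}_3 \\ \text{Id}_3 & 0 \end{bmatrix}$; for the two five-dimensional summands I fix the standard basis of $\text{Sym}^2_0(\mathbb{R}^3)$ --- the three symmetric off-diagonal forms $e^pe^q + e^qe^p$ $(1 \le p < q \le 3)$ together with two traceless diagonal forms --- and insert each into the block patterns $\begin{bmatrix} h' & 0 \\ 0 & -h' \end{bmatrix}$ and $\begin{bmatrix} 0 & h'' \\ h'' & 0 \end{bmatrix}$ that define $(\text{Sym}^2_-)_2'$ and $(\text{Sym}^2_-)_2''$. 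I then evaluate $\chi$ on each generator via the contraction $\chi(h_{ij}e^ie^j) = h_{i\ell}\epsilon_{\ell jk}e^{ijk}$ of (\ref{eq:rho-chi}), reading the constants $\epsilon_{ijk}$ off $\text{Re}(\Upsilon_0)$ in (\ref{eq:OmegaUpsilon}): for a diagonal $h$ the formula collapses to $\sum_{i,j,k} h_{ii}\epsilon_{ijk}e^{ijk}$, while for an off-diagonal $h$ only the terms with three distinct indices survive, so each evaluation is a short sum. A sample computation gives $\chi\big(\begin{bmatrix} \text{Id}_3 & 0 \\ 0 & -\text{Id}_3 \end{bmatrix}\big) = 2\Theta_0$ and $\chi\big(\begin{bmatrix} 0 & \text{Id}_3 \\ \text{Id}_3 & 0 \end{bmatrix}\big) = 2\Delta_0$, and running through the chosen basis of $\text{Sym}^2_0(\mathbb{R}^3)$ produces nonzero multiples of the forms $\Theta_1, \dots, \Theta_5$ and $\Delta_1, \dots, \Delta_5$ spanning $(\Lambda^3_{12})_2'$ and $(\Lambda^3_{12})_2''$. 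As $\chi$ is an isomorphism, these images are bases of the respective image subspaces, proving (a)--(d).

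The only real obstacle is the sign and index bookkeeping in the $\epsilon$-contraction, which must be carried out with care for the two five-dimensional pieces. A convenient independent check of spanning is that linear independence of $\{\Theta_1, \dots, \Theta_5\}$ and of $\{\Delta_1, \dots, \Delta_5\}$ can be seen directly: within each family every form contains a basis monomial $e^{ijk}$ that occurs in no other member of the family, so no nontrivial linear combination can vanish. Since $\dim(\Lambda^3_{12})_2' = \dim(\Lambda^3_{12})_2'' = 5$ by Lemma~\ref{lem:sLagLam3}, five independent elements lying in each subspace automatically form a basis, so (c) and (d) follow with no further computation once membership in the correct summand is confirmed by the $\chi$-preimage.
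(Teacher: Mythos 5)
Your strategy coincides with the paper's own justification: the paper gives no separate proof of this lemma, the $\Theta$'s and $\Delta$'s having been defined precisely as images under $\chi$ of a suitable basis of $\text{Sym}^2_-$, and since $\chi$ is a module isomorphism onto $\Lambda^3_{12}$, images of bases of the four summands are bases of the four image subspaces. Your two sample computations are correct: with $\epsilon_{123} = \epsilon_{246} = 1$ and $\epsilon_{156} = \epsilon_{345} = -1$ one gets exactly $\chi\bigl(\begin{smallmatrix} \mathrm{Id}_3 & 0 \\ 0 & -\mathrm{Id}_3 \end{smallmatrix}\bigr) = 2\Theta_0$ and $\chi\bigl(\begin{smallmatrix} 0 & \mathrm{Id}_3 \\ \mathrm{Id}_3 & 0 \end{smallmatrix}\bigr) = 2\Delta_0$, and your unique-monomial argument for linear independence is valid for both five-element families.

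However, the computation you defer as ``sign and index bookkeeping'' is precisely where part (c), as printed, breaks down. Put $h' = \mathrm{diag}(a,b,c)$ with $a+b+c = 0$ into the block $\bigl[\begin{smallmatrix} h' & 0 \\ 0 & -h' \end{smallmatrix}\bigr]$; the contraction gives $\chi = -4a\,e^{156} + 4b\,e^{246} - 4c\,e^{345}$, so the traceless-diagonal part of $(\Lambda^3_{12})_2'$ is $\{\alpha\,e^{156} + \beta\,e^{246} + \gamma\,e^{345} \colon \beta = \alpha + \gamma\}$, spanned by $\Theta_4 = -e^{156} - e^{246}$ together with $e^{246} + e^{345}$. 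The printed $\Theta_5 = e^{264} + e^{345} = -e^{246} + e^{345}$ violates the relation $\beta = \alpha + \gamma$, so it is not a nonzero multiple of any $\chi$-image of a basis element --- in fact it does not lie in $\Lambda^3_{12}$ at all, since $\Theta_5 \wedge \mathrm{Im}(\Upsilon_0) = 2\,e^{123456} \neq 0$. Consequently neither your primary route (each generator maps to a multiple of some $\Theta_\delta$) nor your fallback (membership plus independence plus dimension count) can be completed for $\Theta_5$: the assertion you left unverified is false for the form as printed, and part (c) holds only after correcting what is evidently a sign typo, $\Theta_5 = e^{246} + e^{345}$ (equivalently $-e^{264} + e^{345}$), with which your argument goes through verbatim (the monomial $e^{345}$ still gives independence). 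Everything else checks out --- $\Theta_0, \Delta_0, \Theta_1, \ldots, \Theta_4$ and all five $\Delta_\delta$ are genuine $\chi$-images --- so your method is the right one; indeed, carrying out the deferred computation honestly is exactly what detects the error, but as written your proof asserts an identity that the computation would refute.
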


We now express $(\tau_2)_{\mathsf{T}}, (\widehat{\tau}_2)_{\mathsf{T}}, \ldots, (\tau_5)_{\mathsf{N}}$ in terms of the above bases.  That is, we define functions $A_p, B_\delta, C_p, D_\delta$ and $E_\delta, E_0, F_\delta, F_0$ and $G_p, J_p, M_p, N_p$ by:
\begin{subequations}  \label{eq:SU3-TorsionSplit-2}
\begin{align}
(\tau_2)_1 & = 4A_p\,\Gamma_p & (\tau_3)_0'  & =  4E_0\,\Theta_0  & (\tau_4)_{\mathsf{T}} & = 12G_p\,\omega^p \\
(\tau_2)_2 & = 4B_\delta\,\Upsilon_\delta & (\tau_3)_2'  & = 4E_\delta\,\Theta_\delta &  (\tau_4)_{\mathsf{N}} & = 12J_p\,\omega^{p+3} \\
(\widehat{\tau}_2)_1 & = 4C_p\,\Gamma_p & (\tau_3)_0'' & = 4F_0\,\Delta_0 & (\tau_5)_{\mathsf{T}} & = 3M_p\,\omega^p \\
(\widehat{\tau}_2)_2 & = 4D_\delta\,\Upsilon_\delta & (\tau_3)_2'' & =  4F_\delta\,\Delta_\delta & (\tau_5)_{\mathsf{N}} & = 3N_p\,\omega^{p+3}
\end{align}
\end{subequations}
The various factors of $3$, $4$, and $12$ are included simply for the sake of clearing future denominators.

\indent Note that the bases of Lemmas \ref{lem:sLagLam2Basis} and \ref{lem:sLagLam3Basis} are orthogonal but \textit{not} orthonormal with respect to the inner product (\ref{eq:SU3-LambdaMetric}) on $\Lambda^k(V^*)$.  Indeed, we have:
\begin{align*}
\Vert \Gamma_p \Vert & = \sqrt{2} & \Vert \Theta_\delta \Vert & = \sqrt{2} & \Vert \Theta_0 \Vert & = 2\sqrt{3} \\
\Vert \Upsilon_\delta \Vert & = \sqrt{2} & \Vert \Delta_\delta \Vert & = \sqrt{2} & \Vert \Delta_0 \Vert & = 2\sqrt{3}
\end{align*}
Thus, in terms of the isometric isomorphisms (\ref{eq:RotIsom}), (\ref{eq:SU3-NatIsom}), (\ref{eq:SU3-DagIsom}), (\ref{eq:SU3-DDagIsom}) of $\S$\ref{ssect:SO3reps}, we have:
\begin{subequations} \label{eq:SU3-Isom}
\begin{align}
[(\tau_3)_0']^\dagger & = 8\sqrt{3}\,E_0  & [(\tau_2)_1]^\natural  & = 4\sqrt{2}\,A_p w_p(\theta) & [(\tau_5)_{\mathsf{T}}]^\S & = 3M_p w_p(\theta) \\
[(\tau_3)_0'']^\ddag & = 8\sqrt{3}\,F_0  & [(\widehat{\tau}_2)_1]^\natural & = 4\sqrt{2}\,C_p w_p(\theta)  & [J(\tau_5)_{\mathsf{N}}]^\S & = 3N_p w_p(\theta)
\end{align}
\end{subequations}
We will need these for our calculations in \S\ref{ssect:sLagMC}.

\subsubsection{The Torsion Functions $T_{ij}$ and $U_i$}\label{sssect:sLagTorsFuns}

\indent \indent Let $(M^6, \Omega, \Upsilon)$ be a $6$-manifold with an $\text{SU}(3)$-structure $(\Omega, \Upsilon)$, and let $g$ denote the underlying Riemannian metric.  Let $F_{\text{SO}(6)} \to M$ denote the oriented orthonormal coframe bundle of $g$, and let $\omega = (\omega^1, \ldots, \omega^6) \in \Omega^1(F_{\text{SO}(6)}; \mathbb{R}^6)$ denote the tautological $1$-form.  By the Fundamental Lemma of Riemannian Geometry, there exists a unique $1$-form $\psi \in \Omega^1(F_{\text{SO}(6)}; \mathfrak{so}(6))$, the Levi-Civita connection form of $g$, satisfying the \textit{first structure equation}
\begin{equation*}
d\omega = -\psi \wedge \omega.
\end{equation*}
\indent Let $\pi \colon F_{\text{SU}(3)} \to M$ denote the $\text{SU}(3)$-coframe bundle of $M$.  Restricted to $F_{\text{SU}(3)} \subset F_{\text{SO}(6)}$, the Levi-Civita $1$-form $\psi$ is no longer a connection $1$-form in general.  Indeed, according to the splitting $\mathfrak{so}(6) = \mathfrak{su}(3) \oplus \mathbb{R}^6 \oplus \mathbb{R}$, we have the decomposition
$$\psi = \gamma + \lambda + \mu,$$
where $\gamma = (\gamma_{ij}) \in \Omega^1(F_{\text{SU}(3)}; \mathfrak{su}(3))$ is a connection $1$-form (the so-called \textit{natural connection} of the $\text{SU}(3)$-structure) and $\lambda \in \Omega^1(F_{\text{SU}(3)}; \mathbb{R}^6)$ and $\mu \in \Omega^1(F_{\text{SU}(3)}; \mathbb{R})$ are $\pi$-semibasic $1$-forms.  Here, we are viewing
\begin{align*}
\mathbb{R}^6 & \simeq \{ (\epsilon_{ijk}v_k) \in \mathfrak{so}(6) \colon (v_i) \in \mathbb{R}^6  \} \\
\mathbb{R} & \simeq \{ (a\Omega_{ij}) \in \mathfrak{so}(6) \colon a \in \mathbb{R} \}
\end{align*}
so that $\lambda$ and $\mu$ take the form
\begin{align*}
\lambda & = \left[ \begin{array}{c c c | c c c}
0 & \lambda_3 & -\lambda_2 & 0 & -\lambda_6 & \lambda_5 \\
-\lambda_3 & 0 & \lambda_1 & \lambda_6 & 0 & -\lambda_4 \\
\lambda_2 & -\lambda_1 & 0 & -\lambda_5 & \lambda_4 & 0 \\ \hline
0 & -\lambda_6 & \lambda_5 & 0 & -\lambda_3 & \lambda_2 \\
\lambda_6 & 0 & -\lambda_4 & \lambda_3 & 0 & -\lambda_1 \\
-\lambda_5 & \lambda_4 & 0 & -\lambda_2 & \lambda_1 & 0
\end{array}\right] & \mu & = \left[ \begin{array}{c c c | c c c}
0 & 0 & 0 & \mu & 0 & 0  \\
0 & 0 & 0 & 0 & \mu & 0  \\
0 & 0 & 0 & 0 & 0 & \mu  \\ \hline
-\mu & 0 & 0 & 0 & 0 & 0  \\
0 & -\mu & 0 & 0 & 0 & 0  \\
0 & 0 & -\mu & 0 & 0 & 0 
\end{array}\right]\!.
\end{align*}
Since $\lambda$ and $\mu$ are $\pi$-semibasic, we may write
\begin{align*}
\lambda_i & = T_{ij}\omega^j & \mu & = U_i\omega^i
\end{align*}
for some matrix-valued function $T = (T_{ij}) \colon F_{\text{SU}(3)} \to \text{Mat}_{6 \times 6}(\mathbb{R})$ and vector-valued function $U = (U_i) \colon F_{\text{SU}(3)} \to \mathbb{R}^6$.  The $1$-forms $\lambda, \mu$, and hence the functions $T_{ij}$ and $U_i$, encode the torsion of the $\text{SU}(3)$-structure.  In this notation, the first structure equation reads
\begin{equation}
d\omega_i = -(\gamma_{ij} + \epsilon_{ijk}\lambda_k + \Omega_{ij}\mu) \wedge \omega_j. \label{eq:SU3-FirstStrEqn}
\end{equation}
\begin{rmk}
	The reader may wonder how the functions $T_{ij}, U_i$ on $F_{\text{SU}(3)}$ are related to the forms $\tau_0, \widehat{\tau}_0, \ldots, \tau_4, \tau_5$ on $M$.  In \cite{MR2287296}, the authors derive expressions for the pullbacks of the torsion forms in terms of $T_{ij}, U_i$.  That is, they derive
	\begin{align*}
	\pi^*(\tau_0) & = -\textstyle \frac{1}{3} \Omega_{ij}T_{ij}  & \pi^*(\tau_4) & = \epsilon_{ijk}T_{ij}\,\omega^k \\
	\pi^*(\widehat{\tau}_0) & = \textstyle \frac{1}{3}T_{ii} &  \pi^*(\tau_5) & = \epsilon_{ijk}T_{ij}\omega^k + 3\Omega_{ik}U_i\,\omega^k
	\end{align*}
	along with similar (more complicated) formulas for $\pi^*(\tau_2), \pi^*(\widehat{\tau}_2), \pi^*(\tau_3)$.  In the next section, we will exhibit a sort of inverse to this, expressing the $T_{ij}, U_i$ in terms of the refined torsion forms $\pi^*(\tau_0), \pi^*(\widehat{\tau}_0), \ldots, \pi^*( (\tau_5)_{\mathsf{T}}), \pi^*( (\tau_5)_{\mathsf{N}})$.
\end{rmk}

\subsubsection{Decomposition of the Torsion Functions}\label{sssect:sLagTorsDecomp}

\indent \indent For our computations in \S\ref{ssect:sLagMC}, we will need to express the torsion functions $T_{ij}$ and $U_i$ in terms of the functions $A_p, B_\delta, \ldots, N_p$.  To this end, we will continue to work on the total space of the $\text{SU}(3)$-coframe bundle $\pi \colon F_{\text{SU}(3)} \to M$, pulling back all of the quantities defined on $M$ to $F_{\text{SU}(3)}$.  Following convention, we systematically omit $\pi^*$ from the notation, so that (for example) $\pi^*(\tau_0)$ will simply be denoted $\tau_0$, etc.  Note, however, that $\pi^*(e^j) = \omega_j$. \\

\indent To begin, recall that the torsion forms $\tau_0, \tau_1, \tau_2, \tau_3$ satisfy
\begin{align*}
d\Omega & = 3\tau_0\,\text{Re}(\Upsilon) + 3\widehat{\tau}_0\,\text{Im}(\Upsilon) + \tau_3 + \tau_4 \wedge \Omega, \\
d\,\text{Re}(\Upsilon) & = 2\widehat{\tau}_0\,\Omega^2 + \tau_5 \wedge \text{Re}(\Upsilon) + \tau_2 \wedge \Omega, \\
d\,\text{Im}(\Upsilon) & = -2\tau_0\,\Omega^2 - J\tau_5 \wedge \text{Re}(\Upsilon) + \widehat{\tau}_2 \wedge \Omega.
\end{align*}
Into the left-hand sides, we substitute (\ref{eq:OmegaUpsilon}) and use the first structure equation (\ref{eq:SU3-FirstStrEqn}) to obtain
\begin{align*}
\widehat{\epsilon}_{\ell j k}T_{\ell i}\, \omega^{i j k} & = 3\tau_0\,\text{Re}(\Upsilon) + 3\widehat{\tau}_0\,\text{Im}(\Upsilon) + \tau_3 + \tau_4 \wedge \Omega, \\
-\textstyle \frac{1}{2} \left( (\Omega_{k m}\Omega_{\ell j} - \Omega_{kj}\Omega_{\ell m})\,T_{mi} + \widehat{\epsilon}_{jk\ell} U_i \right) \omega^{ijk\ell} & = 2\widehat{\tau}_0\,\Omega^2 + \tau_5 \wedge \text{Re}(\Upsilon) + \tau_2 \wedge \Omega, \\
-\textstyle \frac{1}{2}\left(2\Omega_{k\ell}T_{ji} - \epsilon_{jk\ell}U_i  \right)\omega^{ijk\ell}  & = -2\tau_0\,\Omega^2 - J\tau_5 \wedge \text{Re}(\Upsilon) + \widehat{\tau}_2 \wedge \Omega.
\end{align*}
Into the right-hand sides, we again substitute (\ref{eq:OmegaUpsilon}), as well as the expansions (\ref{eq:SU3-TorsionSplit-1}) and (\ref{eq:SU3-TorsionSplit-2}).

Upon equating coefficients, we obtain a system of $56 = \binom{7}{4} + \binom{7}{5}$ linear equations relating the $42 = 6^2 + 6$ functions $T_{ij}, U_j$ on the left side to the $42 = \dim(H^{0,2}(\mathfrak{su}(3)))$ functions $\tau_0, \widehat{\tau}_0, A_p, B_\delta, \ldots, M_p, N_p$ on the right side.  One can then use a computer algebra system (we have used M\textsc{aple}) to solve this linear system for the $T_{ij}$ and $U_i$.

We now exhibit the result, taking advantage of the $\text{SO}(3)$-irreducible splitting
\begin{align*}
\text{Mat}_{6\times 6}(\mathbb{R}) \cong V^* \otimes V^* & \cong (\mathsf{T} \otimes \mathsf{T}) \oplus 2(\mathsf{T} \otimes \mathsf{N}) \oplus (\mathsf{N} \otimes \mathsf{N}) \\
& \cong \left( \Lambda^2(\mathsf{T})  \oplus \text{Sym}^2_0(\mathsf{T}) \oplus \mathbb{R} \right)  \oplus 2\!\left( \mathbb{R} \oplus (\mathsf{T} \otimes \mathsf{N})_1 \oplus (\mathsf{T} \otimes \mathsf{N})_2  \right) \\
& \ \ \ \oplus \left(\Lambda^2(\mathsf{N}) \oplus \text{Sym}^2_0(\mathsf{N}) \oplus \mathbb{R} \right)
\end{align*}
to highlight the structure of the solution.  We have
\begin{subequations} \label{eq:SU3-TorSol1}
\begin{align}
\frac{1}{2} \begin{bmatrix}
0 & T_{12} - T_{21} & T_{13} - T_{31} \\
T_{21} - T_{12} & 0 & T_{23} - T_{32}  \\
T_{31} - T_{13} & T_{32} - T_{23} & 0
\end{bmatrix} & =  \begin{bmatrix}
0 & - C_3 + 3G_3 & C_2 - 3G_2 \\
C_3 - 3G_3  & 0 & -C_1 + 3G_1 \\
-C_2 + 3G_2 & C_1 - 3G_1 & 0
\end{bmatrix} \\
\notag \\
\frac{1}{2}\begin{bmatrix}
2T_{11} & T_{12} + T_{21} & T_{13} + T_{31}  \\
T_{21} + T_{12} & 2T_{22} & T_{23} + T_{32}  \\
T_{31} + T_{13} & T_{32} + T_{23} & 2T_{33}
\end{bmatrix} & = \begin{bmatrix}
B_4 - F_4 & B_3 - F_3 & B_2 - F_2 \\
B_3 - F_3 & -B_4 + B_5 + F_4 - F_5 & B_1 - F_1 \\
B_2 - F_2 & B_1 - F_1 & -B_5 + F_5
\end{bmatrix} \notag \\
& \ \ \ \ \ + \left( \frac{1}{2}\widehat{\tau}_0 - 2F_0 \right)\text{Id}_3
\end{align}
\end{subequations}
corresponding to $\mathsf{T} \otimes \mathsf{T} \cong \Lambda^2(\mathsf{T})  \oplus \text{Sym}^2_0(\mathsf{T}) \oplus \mathbb{R}$ and
\begin{subequations}
\begin{align} \label{eq:SU3-TorSol2}
\frac{1}{2}\begin{bmatrix}
T_{14} - T_{41} & T_{24} - T_{42} & T_{34} - T_{43} \\
T_{15} - T_{51} & T_{25} - T_{52} & T_{35} - T_{53} \\
T_{16} - T_{61} & T_{26} - T_{62} & T_{36} - T_{63}
\end{bmatrix} & =
\begin{bmatrix}
D_4 & D_3 - 3J_3 & D_2 + 3J_2 \\
D_3 + 3J_3 & -D_4 + D_5 & D_1 - 3J_1 \\
D_2 - 3J_2 & D_1 + 3J_1 & -D_5
\end{bmatrix} - \frac{1}{2}\tau_0\,\text{Id}_3 \\
\frac{1}{2}\begin{bmatrix}
T_{14} + T_{41} & T_{24} + T_{42} & T_{34} + T_{43} \\
T_{15} + T_{51} & T_{25} + T_{52} & T_{35} + T_{53} \\
T_{16} + T_{61} & T_{26} + T_{62} & T_{36} + T_{63}
\end{bmatrix} & =
\begin{bmatrix}
E_4 & A_3 + E_3 & -A_2 + E_2 \\
-A_3 + E_3 & -E_4 + E_5 & A_1 + E_1 \\
A_2 + E_2 & -A_1 + E_1 & -E_5
\end{bmatrix} + 2E_0\,\text{Id}_3
\end{align}
\end{subequations}
corresponding to $\mathsf{T} \otimes \mathsf{N} \cong \mathbb{R} \oplus (\mathsf{T} \otimes \mathsf{N})_1 \oplus (\mathsf{T} \otimes \mathsf{N})_2$, and
\begin{subequations} \label{eq:SU3-TorSol3}
\begin{align}
\frac{1}{2} \begin{bmatrix}
0 & T_{45} - T_{54} & T_{46} - T_{64} \\
T_{54} - T_{45} & 0 & T_{56} - T_{65} \\
T_{64} - T_{46} & T_{65} - T_{56} & 0
\end{bmatrix} & = \begin{bmatrix}
0 & -(C_3 + 3G_3) & C_2 + 3G_2 \\
C_3 + 3G_3 & 0 & -(C_1 + 3G_1) \\
-(C_2 + 3G_2) & C_1 + 3G_1 & 0
\end{bmatrix} \\
\notag \\
\frac{1}{2} \begin{bmatrix}
2T_{44} & T_{45} + T_{54} & T_{46} + T_{64} \\
T_{54} + T_{45} & 2T_{55} & T_{56} + T_{65} \\
T_{64} + T_{46} & T_{65} & 2T_{66}
\end{bmatrix} & = \begin{bmatrix}
B_4 + F_4 & B_3 + F_3 & B_2 + F_2 \\
B_3 + F_3 & -B_4 + B_5 - F_4 + F_5 & B_1 + F_1 \\
B_2 + F_2 & B_1 + F_1 & -B_5 - F_5
\end{bmatrix} \notag \\
& \ \ \ \ \ + \left( \frac{1}{2}\widehat{\tau}_0 + 2F_0 \right)\text{Id}_3
\end{align}
\end{subequations}
corresponding to $\mathsf{N} \otimes \mathsf{N} \cong \Lambda^2(\mathsf{N})  \oplus \text{Sym}^2_0(\mathsf{N}) \oplus \mathbb{R}$.  We also have
\begin{align} \label{eq:SU3-TorSol4}
\begin{bmatrix} U_1 \\ U_2 \\ U_3 \end{bmatrix} & = \begin{bmatrix} -4J_1 + N_1 \\ -4J_2 + N_2 \\ -4J_3 + N_3 \end{bmatrix} & \begin{bmatrix} U_4 \\ U_5 \\ U_6 \end{bmatrix} & = \begin{bmatrix} 4G_1 - M_1 \\ 4G_2 - M_2 \\ 4G_3 - M_3 \end{bmatrix}\!.
\end{align}


\subsection{Mean Curvature of Special Lagrangian $3$-Folds}\label{ssect:sLagMC}

\indent \indent In this section, we derive a formula (Theorem \ref{thm:sLagMC}) for the mean curvature of a special Lagrangian $3$-fold in an arbitrary $6$-manifold $(M, \Omega, \Upsilon)$ with $\text{SU}(3)$-structure $(\Omega, \Upsilon)$.  In the process, we observe a necessary condition (Theorem \ref{thm:sLagObs}) for the local existence of special Lagrangian $3$-folds.

We continue to let $\pi \colon F_{\text{SU}(3)} \to M$ denote the $\text{SU}(3)$-coframe bundle of $M$, and $\omega = (\omega_{\mathsf{T}}, \omega_{\mathsf{N}}) \in \Omega^1(F_{\text{SU}(3)}; \mathsf{T}^\sharp \oplus \mathsf{N}^\sharp)$ denote the tautological $1$-form.  As above, $\gamma = (\gamma_{ij}) \in \Omega^1(F_{\text{SU}(3)}; \mathfrak{su}(3))$ denotes the natural connection $1$-form, while $\lambda = (\lambda_{ij}) \in \Omega^1(F_{\text{SU}(3)}; \mathbb{R}^6)$ and $\mu \in \Omega^1(F_{\text{SU}(3)}; \mathbb{R})$ are $\pi$-semibasic $1$-forms encoding the torsion of $(\Omega, \Upsilon)$. \\

\indent Fix a phase $\theta \in [0,2\pi)$ once and for all, fix
$$t := \frac{\theta}{3},$$
and define $1$-forms $\eta, \xi \in \Omega^1(F_{\text{SU}(3)}; \mathbb{R}^6)$ via
\begin{align*}
\eta & = \text{Re}(e^{-it}(\omega_{\mathsf{T}} + i\omega_{\mathsf{N}})) = \cos(t)\,\omega_{\mathsf{T}} + \sin(t)\,\omega_{\mathsf{N}} \\
\xi & = \text{Im}(e^{-it}(\omega_{\mathsf{T}} + i\omega_{\mathsf{N}})) = -\sin(t)\,\omega_{\mathsf{T}} + \cos(t)\,\omega_{\mathsf{N}}.
\end{align*}
Let $f \colon \Sigma^3 \to M^6$ denote an immersion of a phase $\theta$ special Lagrangian $3$-fold into $M$, and let $f^*(F_{\text{SU}(3)}) \to \Sigma$ denote the pullback bundle.  Let $B \subset f^*(F_{\text{SU}(3)})$ denote the subbundle of coframes adapted to $\Sigma$, i.e., the subbundle whose fiber over $x \in \Sigma$ is
\begin{align*}
B|_x & = \{u \in f^*(F_{\text{SU}(3)})|_x \colon u(T_x\Sigma) = e^{it} \cdot \mathsf{T}^\sharp \} \\
& = \{u \in f^*(F_{\text{SU}(3)})|_x \colon u(T_x\Sigma) = \text{span}(v_1(\theta), v_2(\theta), v_3(\theta)) \}
\end{align*}
in the notation of (\ref{eq:vw-vectors}).  We recall (Proposition 2.3) that $\text{SU}(3)$ acts transitively on the set of special Lagrangian $3$-planes with stabilizer $\text{SO}(3)$, so $B \to \Sigma$ is a well-defined $\text{SO}(3)$-bundle.  Note that on $B$, we have
$$\xi = 0.$$
For the rest of \S\ref{ssect:sLagMC}, all of our calculations will be done on the subbundle $B \subset F_{\text{SU}(3)}$. \\

\indent We begin by expressing $\gamma$, $\lambda$, and $\mu$ as block matrices with respect to the splitting $T_xM \simeq \mathsf{T}^\sharp \oplus \mathsf{N}^\sharp$.  The $1$-form $\gamma \in \Omega^1(B; \mathfrak{su}(3))$ takes the block form
\begin{equation*}
\gamma = \begin{bmatrix} \alpha & \beta \\ -\beta & \alpha \end{bmatrix} = \left[ \begin{array}{c c c | c c c}
0 & \alpha_{12} & -\alpha_{13} & \beta_{11} & \beta_{12} & \beta_{13} \\
-\alpha_{12} & 0 & \alpha_{23} & \beta_{21} & \beta_{22} & \beta_{23} \\
\alpha_{13} & -\alpha_{23} & 0 & \beta_{31} & \beta_{32} & \beta_{33} \\ \hline
-\beta_{11} & -\beta_{12} & -\beta_{13} & 0 & \alpha_{12} & \alpha_{13} \\
-\beta_{21} & -\beta_{22} & -\beta_{23} & -\alpha_{12} & 0 & \alpha_{23} \\
-\beta_{31} & -\beta_{32} & -\beta_{33} & -\alpha_{13} & -\alpha_{23} & 0
\end{array}\right]
\end{equation*}
where $\alpha_{pq}, \beta_{pq} \in \Omega^1(B)$ are $1$-forms with $\beta_{pq} = \beta_{qp}$ and $\beta_{11} + \beta_{22} + \beta_{33} = 0$.  As in \S\ref{sssect:sLagTorsFuns}, the $1$-forms $\lambda \in \Omega^1(B; \mathbb{R}^6)$ and $\mu \in \Omega^1(B; \mathbb{R})$ break into blocks as
\begin{align*}
\lambda & = \begin{bmatrix} \lambda_{\mathsf{T}} & \lambda_{\mathsf{N}} \\ \lambda_{\mathsf{N}} & \lambda_{\mathsf{T}} \end{bmatrix} = \left[ \begin{array}{c c c | c c c}
0 & \lambda_3 & -\lambda_2 & 0 & -\lambda_6 & \lambda_5 \\
-\lambda_3 & 0 & \lambda_1 & \lambda_6 & 0 & -\lambda_4 \\
\lambda_2 & -\lambda_1 & 0 & -\lambda_5 & \lambda_4 & 0 \\ \hline
0 & -\lambda_6 & \lambda_5 & 0 & -\lambda_3 & \lambda_2 \\
\lambda_6 & 0 & -\lambda_4 & \lambda_3 & 0 & -\lambda_1 \\
-\lambda_5 & \lambda_4 & 0 & -\lambda_2 & \lambda_1 & 0
\end{array}\right] & \mu & = \begin{bmatrix} 0 & \mu\,\text{Id}_3 \\ -\mu\,\text{Id}_3 & 0 \end{bmatrix}\!.
\end{align*}

\indent Next, we adapt our matrix-valued forms to the geometry at hand, which is that of a splitting $T_xM \simeq T_x\Sigma \oplus (T_x\Sigma)^\perp$.  To this end, recall that the change-of-phase action on $V \simeq \mathbb{C}^3$ is the $\mathbb{S}^1$-action given by $e^{i\vartheta} \cdot (z_1, z_2, z_3) = (e^{i\vartheta}z_1, e^{i\vartheta}z_2, e^{i\vartheta}z_3)$.  Regarding this $\mathbb{S}^1$ as a subgroup of $\text{U}(3) \leq \text{SO}(6)$, we consider the induced $\text{Ad}(\mathbb{S}^1)$-action on $\mathfrak{so}(6)$, given explicitly in block form as
$$\text{Ad}_\vartheta \begin{bmatrix}
A & B \\ C & D
\end{bmatrix} = \begin{bmatrix}
\cos(\vartheta)\,\text{Id}_3 & \sin(\vartheta)\,\text{Id}_3 \\
-\sin(\vartheta)\,\text{Id}_3 & \cos(\vartheta)\,\text{Id}_3 \end{bmatrix}
\begin{bmatrix}
A & B \\ C & D
\end{bmatrix}
\begin{bmatrix}
\cos(\vartheta)\,\text{Id}_3 & -\sin(\vartheta)\,\text{Id}_3 \\
\sin(\vartheta)\,\text{Id}_3 & \cos(\vartheta)\,\text{Id}_3 \end{bmatrix}\!.$$
Viewing $\mathfrak{so}(6) = \mathfrak{su}(3) \oplus \mathbb{R} \oplus \mathbb{R}^6$, note that our $\text{Ad}(\mathbb{S}^1)$-action is trivial on the $\mathfrak{su}(3)$- and $\mathbb{R}$-summands, and thus
$$\text{Ad}_t \gamma = \gamma \ \ \ \ \ \ \ \ \ \text{Ad}_t \mu = \mu.$$
However, the $\text{Ad}(\mathbb{S}^1)$-action is non-trivial on the $\mathbb{R}^6$-summand.  We therefore set $\widetilde{\lambda} = \text{Ad}_t \lambda$, writing
\begin{align*}
\widetilde{\lambda} & = \begin{bmatrix} \widetilde{\lambda}_{\mathsf{T}} & \widetilde{\lambda}_{\mathsf{N}} \\ \widetilde{\lambda}_{\mathsf{N}} & \widetilde{\lambda}_{\mathsf{T}} \end{bmatrix} = \left[ \begin{array}{c c c | c c c}
0 & \widetilde{\lambda}_3 & -\widetilde{\lambda}_2 & 0 & -\widetilde{\lambda}_6 & \widetilde{\lambda}_5 \\
-\widetilde{\lambda}_3 & 0 & \widetilde{\lambda}_1 & \widetilde{\lambda}_6 & 0 & -\widetilde{\lambda}_4 \\
\widetilde{\lambda}_2 & -\widetilde{\lambda}_1 & 0 & -\widetilde{\lambda}_5 & \widetilde{\lambda}_4 & 0 \\ \hline
0 & -\widetilde{\lambda}_6 & \widetilde{\lambda}_5 & 0 & -\widetilde{\lambda}_3 & \widetilde{\lambda}_2 \\
\widetilde{\lambda}_6 & 0 & -\widetilde{\lambda}_4 & \widetilde{\lambda}_3 & 0 & -\widetilde{\lambda}_1 \\
-\widetilde{\lambda}_5 & \widetilde{\lambda}_4 & 0 & -\widetilde{\lambda}_2 & \widetilde{\lambda}_1 & 0
\end{array}\right]\!.
\end{align*}
Explicitly, we have formulas
\begin{align*}
\widetilde{\lambda}_1 & =  \cos(2t) \lambda_1 - \sin(2t) \lambda_4 &  \widetilde{\lambda}_4 & =  \sin(2t) \lambda_1 +  \cos(2t) \lambda_4 \\
\widetilde{\lambda}_2 & =  \cos(2t) \lambda_2 - \sin(2t) \lambda_5 & \widetilde{\lambda}_5 & =  \sin(2t)\lambda_2 +  \cos(2t)\lambda_5 \\
\widetilde{\lambda}_3 & =  \cos(2t) \lambda_3 - \sin(2t) \lambda_6 & \widetilde{\lambda}_6 & = \sin(2t) \lambda_3 + \cos(2t) \lambda_6.
\end{align*} \\
\indent We may now apply these $\mathbb{S}^1$-actions to the structure equation (\ref{eq:SU3-FirstStrEqn}) on $F_{\text{SU}(3)}$.  Using that $\xi = 0$ on $B \subset F_{\text{SU}(3)}$, we deduce the first structure equation on $B$:
$$d\begin{pmatrix} \eta \\ 0 \end{pmatrix} = -\left( \begin{bmatrix} \alpha & \beta \\ -\beta & \alpha \end{bmatrix} + \begin{bmatrix} \widetilde{\lambda}_{\mathsf{T}} & \widetilde{\lambda}_{\mathsf{N}} \\ \widetilde{\lambda}_{\mathsf{N}} & \widetilde{\lambda}_{\mathsf{T}} \end{bmatrix}  + \begin{bmatrix} 0 & \mu\,\text{Id}_3 \\ -\mu\,\text{Id}_3 & 0 \end{bmatrix} \right) \wedge \begin{pmatrix} \eta \\ 0 \end{pmatrix}\!.$$
In particular, the second line gives
$$\beta \wedge \eta = (\widetilde{\lambda}_{\mathsf{N}} - \mu\,\text{Id}_3) \wedge \eta$$
or in detail,
\begin{align} \label{eq:SU3-CondensedStrEqn}
& \begin{bmatrix}
\beta_{11} & \beta_{12} & \beta_{13} \\
\beta_{21} & \beta_{22} & \beta_{23} \\
\beta_{31} & \beta_{32} & \beta_{33} 
\end{bmatrix} \wedge \begin{bmatrix} \eta_1 \\ \eta_2 \\ \eta_3 \end{bmatrix} = \begin{bmatrix}
-\mu & -\widetilde{\lambda}_6 & \widetilde{\lambda}_5 \\
\widetilde{\lambda}_6 & -\mu & -\widetilde{\lambda}_4 \\
-\widetilde{\lambda}_5 & \widetilde{\lambda}_4 & -\mu
\end{bmatrix}
\wedge \begin{bmatrix} \eta_1 \\ \eta_2 \\ \eta_3 \end{bmatrix}
\end{align}
Note that on $B$, the $1$-forms $\beta_{pq}$, $\lambda_j$, and $\mu$ are semibasic, and we write
\begin{align*}
\beta_{pq} & = S_{pqr}\eta_r & \lambda_i & = T_{ij}\omega^j & \mu & = U_i\omega^i
\end{align*}
for some function $S = (S_{pqr}) \colon B \to \text{Sym}^2_0(\mathbb{R}^3) \otimes \mathbb{R}^3$. \\

\indent Now, the $15$ functions $S_{pqr}$ and the $42$ functions $(T_{ij}, U_i)$ are not independent: the equation (\ref{eq:SU3-CondensedStrEqn}) shows that they satisfy $3 \binom{3}{2} = 9$ linear relations.  Explicitly, the first row of (\ref{eq:SU3-CondensedStrEqn}) gives
\begin{align*}
\begin{bmatrix}
S_{123} - S_{132} \\
S_{113} - S_{131} \\
S_{121} - S_{112}
 \end{bmatrix} & = \frac{1}{2}\begin{bmatrix}
T_{22} + T_{33} + T_{55} + T_{66} & T_{21} + T_{54} &  T_{31} + T_{64} \\
T_{25} - T_{52} + T_{36} - T_{63} & T_{24} - T_{51} &  T_{34} - T_{61} \\
T_{22} + T_{33} - T_{55} - T_{66} & T_{21} - T_{54} + 2U_6 & T_{31} - T_{64} - 2U_5 \\
-T_{25} - T_{52} - T_{36} - T_{63} & -T_{24} - T_{51} - 2U_3 & -T_{34} - T_{61} + 2U_2
\end{bmatrix}^T
\begin{bmatrix}
-\sin(3t) \\
\cos(3t) \\
-\sin(t) \\
\cos(t)
\end{bmatrix}
\end{align*}
while the second row gives
\begin{align*}
\begin{bmatrix}
S_{232} - S_{223} \\
S_{231} - S_{123} \\
S_{122} - S_{221}
 \end{bmatrix} & = \frac{1}{2}\begin{bmatrix}
T_{12} + T_{45} & T_{11} + T_{33} + T_{44} + T_{66} &  -T_{32} - T_{65} \\
T_{15} - T_{42} & T_{14} - T_{41} + T_{36} - T_{63} &  -T_{35} + T_{62} \\
T_{12} - T_{45} - 2U_6 & T_{11} + T_{33} - T_{44} - T_{66} &  -T_{32} + T_{65} - 2U_4 \\
-T_{15} - T_{42} + 2U_3 & -T_{14} - T_{41} - T_{36} - T_{63} &  T_{35} + T_{62} + 2U_1
\end{bmatrix}^T
\begin{bmatrix}
-\sin(3t) \\
\cos(3t) \\
-\sin(t) \\
\cos(t)
\end{bmatrix}
\end{align*}
and the third row gives
\begin{align*}
\begin{bmatrix}
S_{233} - S_{332} \\
S_{133} - S_{331} \\
S_{132} - S_{231}
 \end{bmatrix} & = \frac{1}{2}\begin{bmatrix}
-T_{13} - T_{46} & T_{23} + T_{56} & T_{11} + T_{22} + T_{44} + T_{55} \\
-T_{16} + T_{43} & T_{26} - T_{53} &  T_{14} - T_{41} + T_{25} - T_{52} \\
-T_{13} + T_{46} - 2U_5 & T_{23} - T_{56} - 2U_4 & T_{11} + T_{22} - T_{44} - T_{55} \\
T_{16} + T_{43} + 2U_2 & -T_{26} - T_{53} + 2U_1 & -T_{14} - T_{41} - T_{25} - T_{52}
\end{bmatrix}^T
\begin{bmatrix}
-\sin(3t) \\
\cos(3t) \\
-\sin(t) \\
\cos(t)
\end{bmatrix}
\end{align*} \\
\indent We make two observations on this system of linear equations.  First, we notice that it implies
\begin{align*}
0 & = (S_{123} - S_{132}) + (S_{231} - S_{123}) + (S_{132} - S_{231}) \\
& = -(T_{11} + T_{22} + T_{33} + T_{44} + T_{55} + T_{66}) \sin(3t) + (T_{14} - T_{41} + T_{25} - T_{52} + T_{36} - T_{63}) \cos(3t)  \\
& \ \ \ \ - (T_{11} + T_{22} + T_{33} - T_{44} - T_{55} - T_{66}) \sin(t) - (T_{14} + T_{41} + T_{25} + T_{52} + T_{36} + T_{63}) \cos(t).
\end{align*}
Using (\ref{eq:SU3-TorSol1})-(\ref{eq:SU3-TorSol3}), we obtain:
$$0 = \widehat{\tau}_0 \sin(3t) + \tau_0 \cos(3t) - 4F_0 \sin(t) + 4E_0 \cos(t).$$
Thus from (\ref{eq:SU3-Isom}), we deduce:
\begin{thm}\label{thm:sLagObs}
	If a special Lagrangian $3$-fold $\Sigma$ of phase $\theta$ exists in $M$, then the following relation holds at points of $\Sigma$:
	\begin{equation}
	\widehat{\tau}_0 \sin(\theta) + \tau_0 \cos(\theta) = \textstyle \frac{\sqrt{3}}{6} \left( \sin(\frac{\theta}{3})  [(\tau_3)_0'']^\ddag - \cos(\frac{\theta}{3}) [(\tau_3)_0']^\dagger \displaystyle \right)\!.
	\label{eq:SL-Obs}
	\end{equation}
	\noindent In particular, if $\tau_3 = 0$, then the phase of every special Lagrangian $3$-fold in $M$ satisfies the relation $\tan(\theta) = -\tau_0/\widehat{\tau}_0$.
\end{thm}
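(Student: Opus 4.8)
The plan is to manufacture a scalar identity from the adapted structure equation that is independent of the submanifold's extrinsic geometry, and then to recognize it as the stated obstruction. Having passed to the subbundle $B$, on which $\xi = 0$, the normal block of the first structure equation is exactly (\ref{eq:SU3-CondensedStrEqn}), $\beta\wedge\eta = (\widetilde{\lambda}_{\mathsf{N}} - \mu\,\text{Id}_3)\wedge\eta$. First I would write $\beta_{pq} = S_{pqr}\eta_r$ and reduce $\widetilde{\lambda}, \mu$ to $\eta$-semibasic forms, using that $\omega_{\mathsf{T}} = \cos(t)\,\eta$ and $\omega_{\mathsf{N}} = \sin(t)\,\eta$ hold on $B$ (with $\eta = (\eta_1,\eta_2,\eta_3)$). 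Combining the $2t$-rotation built into $\widetilde{\lambda}$ with this $t$-rotation is what produces the $\sin(3t),\cos(3t),\sin(t),\cos(t)$ coefficients, hence the phase-$\theta$ dependence. Equating coefficients of $\eta^{rs}$ then yields the three displayed linear systems tying the unknown functions $S_{pqr}$ to the torsion functions $T_{ij}, U_i$.

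The crux is to find a combination of these relations that kills every $S_{pqr}$. I would observe that the three scalars $S_{123}-S_{132}$, $S_{231}-S_{123}$, and $S_{132}-S_{231}$ --- one drawn from each system --- telescope to zero identically, so summing the corresponding right-hand sides gives a relation with no $S$-content:
\[
0 = -\text{tr}(T)\sin(3t) + (\ldots)\cos(3t) - (T_{11}+T_{22}+T_{33}-T_{44}-T_{55}-T_{66})\sin(t) - (\ldots)\cos(t).
\]
Feeding in the solved torsion functions (\ref{eq:SU3-TorSol1})--(\ref{eq:SU3-TorSol3}) --- notably $\text{tr}(T)=3\widehat{\tau}_0$, the symplectic trace $\sum_p(T_{p,p+3}-T_{p+3,p})=-3\tau_0$, and the mixed combinations $\sum_p(T_{pp}-T_{p+3,p+3})=-12F_0$ and $\sum_p(T_{p,p+3}+T_{p+3,p})=12E_0$ --- collapses this to the clean scalar identity $0 = \widehat{\tau}_0\sin(3t)+\tau_0\cos(3t)-4F_0\sin(t)+4E_0\cos(t)$.

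It remains only to repackage this invariantly. Putting $t=\theta/3$, so $3t=\theta$, and substituting $E_0 = \tfrac{1}{8\sqrt3}[(\tau_3)_0']^\dagger$ and $F_0 = \tfrac{1}{8\sqrt3}[(\tau_3)_0'']^\ddag$ from (\ref{eq:SU3-Isom}), I would simplify $\tfrac{4}{8\sqrt3}=\tfrac{\sqrt3}{6}$ and move the $E_0,F_0$ terms to the right to arrive at (\ref{eq:SL-Obs}). The concluding remark is then immediate: if $\tau_3=0$ then $(\tau_3)_0'=(\tau_3)_0''=0$, the right-hand side vanishes, and $\widehat{\tau}_0\sin\theta+\tau_0\cos\theta=0$ rearranges to $\tan\theta=-\tau_0/\widehat{\tau}_0$.

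The main obstacle is the middle step: recognizing \emph{which} linear combination of the nine structure relations is free of the a priori unknown second-fundamental-form functions $S_{pqr}$. The telescoping combination is precisely the one extracting the trace-type ($\mathcal{H}_0$) content of the system, and spotting it is far easier when the relations are read against the $\text{SO}(3)$-decomposition than when the nine equations are attacked head-on. Everything after that --- substituting the explicit torsion solution and translating through the isometric isomorphisms --- is routine bookkeeping, provided one already has (\ref{eq:SU3-TorSol1})--(\ref{eq:SU3-TorSol3}) in hand.
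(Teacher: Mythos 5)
Your proposal is correct and follows essentially the same route as the paper's proof: the same adapted structure equation on $B$, the same telescoping combination $(S_{123}-S_{132})+(S_{231}-S_{123})+(S_{132}-S_{231})=0$ eliminating the second-fundamental-form functions, the same substitution of (\ref{eq:SU3-TorSol1})--(\ref{eq:SU3-TorSol3}) to reach $0=\widehat{\tau}_0\sin(3t)+\tau_0\cos(3t)-4F_0\sin(t)+4E_0\cos(t)$, and the same translation through (\ref{eq:SU3-Isom}). Your intermediate identifications ($\mathrm{tr}(T)=3\widehat{\tau}_0$, $\sum_p(T_{p,p+3}-T_{p+3,p})=-3\tau_0$, $\sum_p(T_{pp}-T_{p+3,p+3})=-12F_0$, $\sum_p(T_{p,p+3}+T_{p+3,p})=12E_0$) and the reduction $\omega_{\mathsf{T}}=\cos(t)\,\eta$, $\omega_{\mathsf{N}}=\sin(t)\,\eta$ on $B$ all check out against the paper's formulas.
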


\begin{cor}\label{cor:sLagObs}
	Fix $x \in M$ and $\theta \in [0,2\pi)$.  If every phase $\theta$ special Lagrangian $3$-plane in $T_xM$ is tangent to a phase $\theta$ special Lagrangian $3$-fold, then $\tau_3|_x = 0$ and $\widehat{\tau}_0|_x \sin(\theta) = -\tau_0|_x \cos(\theta)$.
\end{cor}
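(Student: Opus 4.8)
The plan is to apply Theorem \ref{thm:sLagObs} to \emph{every} phase $\theta$ special Lagrangian $3$-plane at $x$ at once and exploit the fact that the two sides of (\ref{eq:SL-Obs}) respond very differently to a change of plane. The key structural observation is that the left-hand side, $c := \widehat{\tau}_0|_x\sin\theta + \tau_0|_x\cos\theta$, is a fixed real number independent of the chosen $3$-plane, whereas the right-hand side is assembled from the refined torsion forms $(\tau_3)_0', (\tau_3)_0''$, which depend on the splitting $T_xM = \mathsf{T}^\sharp \oplus \mathsf{N}^\sharp$ determined by the plane. Matching a constant against a quantity that genuinely varies over the orbit of planes should force $\tau_3|_x=0$ and $c=0$.

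First I would record the correspondence between phase $\theta$ and phase $0$ planes. By the change-of-phase $\mathbb{S}^1$-action, rotation by $e^{-i\theta/3}$ carries a phase $\theta$ SL plane $E$ to the phase $0$ SL plane $\mathsf{T}^\sharp := e^{-i\theta/3}\cdot E$, with $E = e^{i\theta/3}\cdot\mathsf{T}^\sharp$; this is a bijection between phase $\theta$ planes and the phase $0$ planes used to define the refined torsion in \S\ref{ssect:sLagreftors}. Since $\text{SU}(3)$ acts transitively on phase $\theta$ SL planes with stabilizer $\text{SO}(3)$, as $E$ ranges over all such planes the associated splitting ranges over the full orbit $\text{SU}(3)/\text{SO}(3)$. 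By hypothesis each $E$ is tangent to a phase $\theta$ SL $3$-fold, so Theorem \ref{thm:sLagObs} yields (\ref{eq:SL-Obs}) at $x$, relative to the splitting of $E$, for every one of them.

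Next I would package the right-hand side as a single linear functional in $\tau_3$. Because $\dagger$ and $\ddag$ are the isometries (\ref{eq:SU3-DagIsom})--(\ref{eq:SU3-DDagIsom}) normalized by $\Theta_0 \mapsto 2\sqrt{3} = \Vert\Theta_0\Vert$ and $\Delta_0 \mapsto 2\sqrt{3} = \Vert\Delta_0\Vert$, orthogonal projection onto the corresponding lines gives $[(\tau_3)_0']^\dagger = \langle\tau_3,\Theta_0\rangle/(2\sqrt{3})$ and $[(\tau_3)_0'']^\ddag = \langle\tau_3,\Delta_0\rangle/(2\sqrt{3})$. Hence (\ref{eq:SL-Obs}) reads
\[
c = \tfrac{1}{12}\,\langle \tau_3,\ \Psi(E)\rangle, \qquad \Psi(E) := \sin(\tfrac{\theta}{3})\,\Delta_0(E) - \cos(\tfrac{\theta}{3})\,\Theta_0(E) \in \Lambda^3_{12},
\]
for every plane $E$, where $\Theta_0(E), \Delta_0(E)$ are the distinguished generators attached to the splitting of $E$. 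Writing $E = g\cdot E_0$ with $g\in\text{SU}(3)$ and $E_0$ the standard plane, equivariance of the construction gives $\Psi(E) = g\cdot\Psi_0$ with $\Psi_0 := \Psi(E_0)$; note $\Psi_0 \neq 0$, since $\Theta_0, \Delta_0$ are nonzero elements of distinct orthogonal summands and $(\sin\tfrac{\theta}{3},\cos\tfrac{\theta}{3})\neq(0,0)$.

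Finally I would run the averaging argument, which is where the irreducibility of $\Lambda^3_{12}$ does the work. The identity says the function $g \mapsto \langle\tau_3, g\cdot\Psi_0\rangle$ equals the constant $12c$ on $\text{SU}(3)$; integrating against Haar measure gives $12c = \langle\tau_3,\ \int_{\text{SU}(3)} g\cdot\Psi_0\,dg\rangle$. As $\Lambda^3_{12}$ is a nontrivial irreducible $\text{SU}(3)$-module it has no invariant vectors, so the integral vanishes and $c=0$; this is precisely $\widehat{\tau}_0|_x\sin\theta = -\tau_0|_x\cos\theta$. With $c=0$ in hand, $\langle\tau_3, g\cdot\Psi_0\rangle = 0$ for all $g$, and since $\Psi_0\neq 0$ the orbit $\{g\cdot\Psi_0\}$ spans the irreducible $\Lambda^3_{12}$, forcing $\tau_3|_x = 0$. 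The one point demanding care --- and the main obstacle --- is justifying the equivariance $\Psi(g\cdot E_0) = g\cdot\Psi_0$, i.e.\ that $\Theta_0, \Delta_0$ transform naturally under $\text{SU}(3)$ as the splitting rotates; this rests on the $\text{SU}(3)$-invariance of $\text{Re}(\Upsilon_0), \text{Im}(\Upsilon_0)$ together with the phase-plane correspondence established above.
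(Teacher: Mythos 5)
Your proof is correct and takes essentially the same route as the paper: the paper's proof likewise applies Theorem \ref{thm:sLagObs} to every phase $\theta$ special Lagrangian $3$-plane at $x$, observes that this yields an $\text{SU}(3)$-invariant linear relation between $\tau_0$, $\widehat{\tau}_0$, and $\tau_3$, and concludes $\tau_3|_x = 0$ (hence the scalar relation) by Schur's Lemma applied to the nontrivial irreducible module $\Lambda^3_{12}$. Your Haar-averaging step and the orbit-spanning argument are precisely a concrete implementation of that Schur's Lemma invocation, with the equivariance of $\Theta_0, \Delta_0$ under the rotating splitting made explicit.
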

\begin{proof}[Proof of Corollary \ref{cor:sLagObs}]
	The hypotheses imply that equation (\ref{eq:SL-Obs}) holds for all phase $\theta$ special Lagrangian $3$-planes at $x \in M$. Thus, we get an $\text{SU}(3)$-invariant linear relation between $\tau_0,$ $\widehat{\tau}_0$, and $\tau_3.$ This implies that $\tau_3=0$ by Schur's Lemma. The statement $\widehat{\tau}_0|_x \sin(\theta) = -\tau_0|_x \cos(\theta)$ follows immediately.
\end{proof}

Second, after using $S_{11r} + S_{22r} + S_{33r} = 0$ for each $r = 1,2,3$, we observe that:
\begin{subequations} \label{eq:STRel}
\begin{align}
& 2(S_{111} + S_{122} + S_{133}) =  (T_{26} + T_{62} - T_{35} - T_{53}) \cos(3t) - ( T_{23} - T_{32} + T_{56} - T_{65}) \sin(3t) \notag \\
& \ \ \ \ \ \ \ \  - ( T_{23} - T_{32} - T_{56} + T_{65} - 4U_4) \sin(t) + ( -T_{26} + T_{62} + T_{35} - T_{53} + 4U_1) \cos(t) \\
\notag \\
& 2(S_{121} + S_{222} + S_{233})  = (-T_{16} - T_{61} + T_{34} + T_{43}) \cos(3t) - (-T_{13} + T_{31} - T_{46} + T_{64}) \sin(3t) \notag \\
& \ \ \ \ \ \ \ \ - (-T_{13} + T_{31} + T_{46} - T_{64} - 4U_5) \sin(t) + (T_{16} - T_{61} - T_{34} + T_{43} + 4U_2 ) \cos(t) \\
\notag \\
& 2(S_{131} + S_{232} + S_{333}) = (T_{15} + T_{51} - T_{24} - T_{42}) \cos(3t) - ( T_{12} - T_{21} + T_{45} - T_{54} ) \sin(3t) \notag \\
& \ \ \ \ \ \ \ \  - ( T_{12} - T_{21} - T_{45} + T_{54} - 4U_6 ) \sin(t) + ( -T_{15} +T_{51} + T_{24} - T_{42} + 4U_3 ) \cos(t).
\end{align}
\end{subequations}
We are now ready to compute the mean curvature of a phase $\theta$ special Lagrangian $3$-fold.
\begin{thm}\label{thm:sLagMC}
	Let $\Sigma \subset M$ be a special Lagrangian $3$-fold immersed in a $6$-manifold $M$ equipped with an $\text{SU}(3)$-structure.  Then the mean curvature vector $H$ of $\Sigma$ is given by
	$$\textstyle H = -\frac{1}{\sqrt{2}}\cos(\theta)\,[ (\tau_2)_1 ]^\natural - \frac{1}{\sqrt{2}}\sin(\theta)\,[ (\widehat{\tau}_2)_1 ]^\natural + \textstyle \sin(\frac{\theta}{3})\,[(\tau_5)_{\mathsf{T}}]^\S - \cos(\frac{\theta}{3})\,[J(\tau_5)_{\mathsf{N}}]^\S.$$
	In particular, the largest torsion class of $\text{SU}(3)$-structures $(\Omega, \Upsilon)$ for which every special Lagrangian $3$-fold (of every phase) is minimal is $X_0^+ \oplus X_0^- \oplus X_3 \oplus X_4$.
\end{thm}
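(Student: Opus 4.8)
The plan is to read the mean curvature straight off the adapted structure equations on $B$, by identifying the second fundamental form of $\Sigma$ with the normal--tangent block of the (phase-rotated) Levi-Civita connection and then taking its trace. Since the phase $t = \theta/3$ is fixed, $\mathrm{Ad}_t$ is an honest conjugation (there is no $dt$ term), so $\widetilde{\psi} := \gamma + \widetilde{\lambda} + \mu$ is exactly the Levi-Civita connection written in the $\Sigma$-adapted coframe $(\eta,\xi)$, on which $\xi = 0$. In the block form with respect to $T_xM \simeq \mathsf{T}^\sharp \oplus \mathsf{N}^\sharp$, the lower-left (normal--tangent) block of $\widetilde{\psi}$ is $-\beta + \widetilde{\lambda}_{\mathsf{N}} - \mu\,\mathrm{Id}_3$, and the second fundamental form of $\Sigma$ is read off from the restriction of this block to $\Sigma$. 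Concretely, writing the $\eta_q$-coefficient of the $(a,p)$ entry as $h^a_{pq}$, the mean curvature is $H = \sum_a H^a\,w_a(\theta)$ with $H^a = \sum_p h^a_{pp}$; the tangent-index symmetry $h^a_{pq} = h^a_{qp}$ needed for this to be the genuine second fundamental form is precisely the content of the structure equation $d\xi = 0$, i.e.\ of \eqref{eq:SU3-CondensedStrEqn}.

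First I would expand each $H^a$. The diagonal contribution of $-\beta$ (recall $\beta_{pq} = S_{pqr}\eta_r$) produces the trace $-\sum_q S_{aqq}$ of the symmetric tensor $S_{pqr}$, while the $\widetilde{\lambda}_{\mathsf{N}}$- and $\mu$-terms contribute explicit combinations of $T_{ij}, U_i$ after substituting $\omega^p = \cos(t)\,\eta_p$ and $\omega^{p+3} = \sin(t)\,\eta_p$ (valid on $B$ because $\xi = 0$) together with the rotation formulas for $\widetilde{\lambda}_i$. The conceptual crux is that $S_{pqr}$ is extrinsic second-fundamental-form data, not part of the torsion, so a priori $H$ could depend on it. The key step that resolves this is that the trace part of \eqref{eq:SU3-CondensedStrEqn} is exactly the three relations \eqref{eq:STRel}, which express the combinations $\sum_q S_{aqq}$ back in terms of $T_{ij}, U_i$. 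Substituting these eliminates every $S$, leaving $H^a$ as a closed expression in $T_{ij}, U_i$ and trigonometric functions of $t$.

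The remaining step is bookkeeping: substitute the solved torsion functions \eqref{eq:SU3-TorSol1}--\eqref{eq:SU3-TorSol4}, which write $T_{ij}, U_i$ in terms of $A_p, B_\delta, \dots, N_p$. I expect a substantial collapse, with all of $B_\delta, D_\delta, E_\bullet, F_\bullet, G_p, J_p$ cancelling and only $H^a = -4\cos(3t)\,A_a - 4\sin(3t)\,C_a + 3\sin(t)\,M_a - 3\cos(t)\,N_a$ surviving. Comparing with the isometries recorded in \eqref{eq:SU3-Isom} --- namely $[(\tau_2)_1]^\natural = 4\sqrt{2}\,A_p w_p(\theta)$, $[(\widehat{\tau}_2)_1]^\natural = 4\sqrt{2}\,C_p w_p(\theta)$, $[(\tau_5)_{\mathsf{T}}]^\S = 3M_p w_p(\theta)$, $[J(\tau_5)_{\mathsf{N}}]^\S = 3N_p w_p(\theta)$ --- and using $\theta = 3t$ turns $H = \sum_a H^a w_a(\theta)$ into the stated formula. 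The main obstacle is organizational rather than conceptual: correctly isolating the trace of the second fundamental form, keeping the $\mathrm{Ad}_t$-rotation and the $\cos t / \sin t$ substitutions straight, and trusting the large linear substitution (best verified by computer algebra, as in \S\ref{sssect:sLagTorsDecomp}) to produce the advertised cancellations.

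Finally, the torsion-class statement follows by inspecting the formula. If $\tau_2 = \widehat{\tau}_2 = \tau_5 = 0$, then for every adapted splitting the refined pieces $(\tau_2)_1, (\widehat{\tau}_2)_1, (\tau_5)_{\mathsf{T}}, (\tau_5)_{\mathsf{N}}$ vanish, so $H \equiv 0$ and every special Lagrangian of every phase is minimal; this is precisely the torsion class $X_0^+ \oplus X_0^- \oplus X_3 \oplus X_4$. Conversely, the coefficients of $(\tau_2)_1$, $(\widehat{\tau}_2)_1$, $(\tau_5)_{\mathsf{T}}$, $(\tau_5)_{\mathsf{N}}$ in $H$ are nonzero for suitable choices of $\theta$, so requiring minimality forces each of these refined forms to vanish for every splitting $T_xM = \mathsf{T}^\sharp \oplus \mathsf{N}^\sharp$; since the stabilizers $\mathrm{SO}(3)$ of the various special Lagrangian $3$-planes sweep out enough of $\mathrm{SU}(3)$ that the $(\Lambda^2_8)_1$-, $\mathsf{T}$-, and $\mathsf{N}$-components detect all of $\Lambda^2_8$ and $\Lambda^1$, this is equivalent to $\tau_2 = \widehat{\tau}_2 = \tau_5 = 0$. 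No larger torsion class can guarantee minimality, since admitting any of $X_2^+, X_2^-, X_5$ allows a structure with one of these refined forms nonzero and hence $H \neq 0$. Thus $X_0^+ \oplus X_0^- \oplus X_3 \oplus X_4$ is the largest such class.
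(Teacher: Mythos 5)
Your proposal is correct and takes essentially the same approach as the paper's proof: you compute $H$ as the trace of the normal--tangent block of the adapted connection on $B$ (this is exactly (\ref{eq:SU3-H})), eliminate the second-fundamental-form functions $S_{pqr}$ using the trace relations (\ref{eq:STRel}) coming from the structure equation (\ref{eq:SU3-CondensedStrEqn}), substitute the solved torsion functions (\ref{eq:SU3-TorSol1})--(\ref{eq:SU3-TorSol4}), and convert to the refined torsion forms via (\ref{eq:SU3-Isom}). The only differences are presentational: you make explicit the Cartan-lemma symmetry $h^a_{pq}=h^a_{qp}$ implied by $\xi=0$ and you spell out the converse direction of the torsion-class claim, both of which the paper leaves implicit.
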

\begin{proof}
	The mean curvature vector may be computed as follows:
	\begin{align} \label{eq:SU3-H}
	\begin{bmatrix} H_1 \\ H_2 \\ H_3 \end{bmatrix} \eta^{123} 
	& = \begin{bmatrix}
	-\beta_{11} & -\beta_{12} & -\beta_{13} \\
	-\beta_{21} & -\beta_{22} & -\beta_{23} \\
	-\beta_{31} & -\beta_{32} & -\beta_{33} 
	\end{bmatrix} \wedge \begin{bmatrix} \eta^{23} \\ \eta^{31} \\ \eta^{12} 
	\end{bmatrix} + \begin{bmatrix}
	-\mu & -\widetilde{\lambda}_6 & \widetilde{\lambda}_5 \\
	\widetilde{\lambda}_6 & -\mu & -\widetilde{\lambda}_4 \\
	-\widetilde{\lambda}_5 & \widetilde{\lambda}_4 & -\mu
	\end{bmatrix}  \wedge \begin{bmatrix} \eta^{23} \\ \eta^{31} \\ \eta^{12} 
	\end{bmatrix}
	\end{align}
	To evaluate the first term of (\ref{eq:SU3-H}), we substitute $\beta_{pq} = S_{pqr}\eta_r$, followed by (\ref{eq:STRel}), and finally (\ref{eq:SU3-TorSol1})-(\ref{eq:SU3-TorSol4}), to obtain:
	\begin{align*}
	& \begin{bmatrix}
	-\beta_{11} & -\beta_{12} & -\beta_{13} \\
	-\beta_{21} & -\beta_{22} & -\beta_{23} \\
	-\beta_{31} & -\beta_{32} & -\beta_{33} 
	\end{bmatrix} \wedge \begin{bmatrix} \eta^{23} \\ \eta^{31} \\ \eta^{12} 
	\end{bmatrix} = -\begin{bmatrix}
	S_{111} + S_{122} + S_{133} \\
	S_{121} + S_{222} + S_{233} \\
	S_{131} + S_{232} + S_{333} \\
	\end{bmatrix} \eta_{123} \\
	& = 2\begin{bmatrix}
	-A_1 \cos(3t) - C_1\sin(3t) + (M_1 - G_1) \sin(t) + (J_1 - N_1) \cos(t)  \\
	-A_2 \cos(3t) - C_2 \sin(3t) + (M_2 - G_2) \sin(t) + (J_2 - N_2 ) \cos(t) \\
	-A_3 \cos(3t) - C_3 \sin(3t) + (M_3 - G_3) \sin(t) + (J_3 - N_3 ) \cos(t)
	\end{bmatrix} \eta_{123}
	\end{align*}
	Similarly, to evaluate the second term of (\ref{eq:SU3-H}), we substitute $\lambda_i = T_{ij}\omega^j$ and $\mu = U_i\omega^i$ followed by (\ref{eq:SU3-TorSol1})-(\ref{eq:SU3-TorSol4}) to obtain:
	\begin{align*}
	& \begin{bmatrix}
	-\mu & -\widetilde{\lambda}_6 & \widetilde{\lambda}_5 \\
	\widetilde{\lambda}_6 & -\mu & -\widetilde{\lambda}_4 \\
	-\widetilde{\lambda}_5 & \widetilde{\lambda}_4 & -\mu
	\end{bmatrix} \wedge \begin{bmatrix} \eta^{23} \\ \eta^{31} \\ \eta^{12} 
	\end{bmatrix} \\
	& = 2\begin{bmatrix}
	-A_1 \cos(3t) - C_1 \sin(3t)  + (G_1 + \frac{1}{2}M_1) \sin(t) - (J_1 + \frac{1}{2}N_1) \cos(t) \\
	-A_2 \cos(3t) - C_2 \sin(3t) + (G_2 + \frac{1}{2}M_2) \sin(t) - (J_2 + \frac{1}{2}N_2) \cos(t) \\
	-A_3 \cos(3t) - C_3 \sin(3t) + (G_3 + \frac{1}{2}M_3) \sin(t) - (J_3 + \frac{1}{2}N_3) \cos(t)
	\end{bmatrix} \eta_{123}
	\end{align*}
	We conclude that
	$$H_p = -4 A_p \cos(3t) - 4 C_p \sin(3t) + 3 M_p \sin(t) - 3N_p \cos(t),$$
	and so (\ref{eq:SU3-Isom}) and $t = \theta/3$ yields
	$$\textstyle H = -\frac{1}{\sqrt{2}}\cos(\theta)\,[ (\tau_2)_1 ]^\natural - \frac{1}{\sqrt{2}}\sin(\theta)\,[ (\widehat{\tau}_2)_1 ]^\natural + \textstyle \sin(\frac{\theta}{3})\,[(\tau_5)_{\mathsf{T}}]^\S - \cos(\frac{\theta}{3})\,[J(\tau_5)_{\mathsf{N}}]^\S.$$
	Thus, the largest torsion class of $\text{SU}(3)$-structures for which $H = 0$ for all phases is the one for which $\tau_2 = \widehat{\tau}_2 = \tau_5 = 0$, namely $X_0^+ \oplus X_0^- \oplus X_3 \oplus X_4$.
\end{proof}
\begin{rmk}
	In the following table, we summarize the results above for certain special classes of $\text{SU}(3)$-structures encountered in the literature.
	\begin{center}
		\begin{tabular}{ | c | c | c | c | }
			\hline
			Name & Torsion Class & Mean Curvature & Necessary Condition for \\ 
			&  & of Phase $\theta$ SLags & Local Existence of Phase $\theta$ \\     
			&  &  & SLag at a Point \\ \hline \hline    
			CY & $0$ & $0$ & $-$ \\ \hline
			NK 1 & $X_0^+$ & $0$ & $\tau_0 \cos(\theta) = 0$ \\ \hline
			NK 2 & $X_0^-$ & $0$ & $\widehat{\tau}_0 \sin(\theta) = 0$ \\ \hline
			GCY & $X_2^+ \oplus X_2^-$ & $-\frac{1}{\sqrt{2}}(\sin(\theta)\,[ (\widehat{\tau}_2)_1 ]^\natural $ & $-$ \\ 
			& & $ \ \ \ \ \ \ \ \ + \cos(\theta)\,[ (\tau_2)_1 ]^\natural)$   &  \\ \hline
			Half-Flat & $X_0^+ \oplus X_2^- \oplus X_3$ & $-\frac{1}{\sqrt{2}}\sin(\theta)\,[ (\widehat{\tau}_2)_1 ]^\natural$ & $\frac{\sqrt{3}}{6}\! \left( \sin(\frac{\theta}{3})  [(\tau_3)_0'']^\ddag - \cos(\frac{\theta}{3}) [(\tau_3)_0']^\dagger \right)$ \\ 
			&  &  & $= \tau_0 \cos(\theta)$ \\ \hline
			Symp Half-Flat & $X_2^-$ & $-\frac{1}{\sqrt{2}}\sin(\theta)\,[ (\widehat{\tau}_2)_1 ]^\natural$ & $-$ \\ \hline         
			Balanced & $X_3$ & $0$ & $\sin(\frac{\theta}{3})  [(\tau_3)_0'']^\ddag = \cos(\frac{\theta}{3}) [(\tau_3)_0']^\dagger$ \\ \hline        
			Class $X_4$        & $X_4$ & $0$ & $-$ \\ \hline            
		\end{tabular}
	\end{center}
	Here, we are using the shorthand
	\begin{align*}
	& \text{CY = Calabi-Yau} \\
	& \text{NK 1 = Nearly-K\"{a}hler with convention } d\Omega = 3\tau_0\,\text{Re}(\Upsilon) \text{ and } d\,\text{Im}(\Upsilon) = -2\tau_0\,\Omega^2 \\
	& \text{NK 2 = Nearly-K\"{a}hler with convention } d\Omega = 3\widehat{\tau}_0\,\text{Im}(\Upsilon) \text{ and } d\,\text{Re}(\Upsilon) = 2\widehat{\tau}_0\,\Omega^2 \\
	& \text{GCY = Generalized Calabi-Yau} \\
	& \text{Symp Half-Flat = Symplectic Half-Flat = Special Generalized Calabi-Yau (SGCY)}
	\end{align*}
	Both conventions for nearly-K\"{a}hler $6$-manifolds are found in the literature (contrast, say, \cite{MR3583352} with \cite{bryant2006geometry}).  Generalized Calabi-Yau structures are studied in, for example, \cite{de2006maslov} and \cite{MR2287296}. \\
	\indent Half-flat structures have been used by Hitchin \cite{hitchin2001stable} to construct $\text{G}_2$-manifolds via evolution equations.  Symplectic half-flat structures are studied in \cite{tomassini2008symplectic} and \cite{MR2287296}, the latter work referring to them as ``special generalized Calabi-Yau" structures. \\
	\indent Balanced $\text{SU}(3)$-structures on connected sums of copies of $\mathbb{S}^3 \times \mathbb{S}^3$ are constructed in \cite{fu2012balanced}.  Hypersurfaces in $6$-manifolds with balanced $\text{SU}(3)$-structures are studied in \cite{fernandez2009balanced}.  Nilmanifolds with $\text{SU}(3)$-structures of class $X_4$ are constructed in $\S$4.6 of \cite{schoemann2007almost}.
\end{rmk}


\bibliographystyle{plain}
\bibliography{MeanCurvRef}

\Addresses

\end{document}